\definecolor{citegreen}{rgb}{0,0.6,0}
\definecolor{refred}{rgb}{0.8,0,0}
\title{Smooth long-time existence of Harmonic Ricci Flow\\ on surfaces}
\author{Reto Buzano and Melanie Rupflin}
\date{}
\providecommand{\norm}[1]{\Vert#1\Vert} 
\providecommand{\Norm}[1]{\big\Vert#1\big\Vert} 
\providecommand{\abs}[1]{\vert#1\vert} 
\providecommand{\Abs}[1]{\big\vert#1\big\vert} 
\providecommand{\Aabs}[1]{\left\vert#1\right\vert} 
\providecommand{\abso}[1]{\vert#1\vert_{g_0}} 
\providecommand{\Abso}[1]{\big\vert#1\big\vert_{g_0}}
\providecommand{\Scal}[1]{\big\langle #1\big\rangle}
\DeclareMathOperator{\Vol}{vol}
\DeclareMathOperator{\inj}{inj}
\DeclareMathOperator{\tr}{tr}
\DeclareMathOperator{\im}{im}
\DeclareMathOperator{\id}{id}
\DeclareMathOperator{\Sym}{Sym}
\DeclareMathOperator{\Real}{Re}
\DeclareMathOperator{\arsinh}{arsinh}
\newcommand{\Rm}{\mathrm{Rm}}
\newcommand{\Ric}{\mathrm{Ric}}
\newcommand{\R}{\ensuremath{{\mathbb R}}}
\newcommand{\N}{\ensuremath{{\mathbb N}}}
\newcommand{\Z}{\ensuremath{{\mathbb Z}}}
\newcommand{\Hol}{\mathcal{H}}
\newcommand{\newhol}{\mathrm{H}}
\newcommand{\Conf}{\mathcal{C}}
\newcommand{\Col}{\mathrm{Col}}
\newcommand{\half}{\tfrac{1}{2}}
\newcommand{\TMint}{\int_0^T\!\!\int_M}
\newcommand{\na}{\nabla}
\newcommand{\nao}{\nabla_{g_0}}
\newcommand{\dmo}{d\mu_{g_0}}
\newcommand{\Deltao}{\Delta_{g_0}}
\newcommand{\deltao}{\delta_{g_0}}
\newcommand{\eps}{\varepsilon}
\newcommand{\tensor}{\otimes}
\newcommand{\ddt}{\frac{d}{dt}}
\newcommand{\M}{\ensuremath{{\mathcal M}}_{c}}
\newcommand{\tf}{\overset{\circ}}
\newcommand{\pt}{\partial_t}
\newcommand{\peps}{\partial_\eps}
\newcommand{\pxi}{\partial_{x_i}}
\newcommand{\pxj}{\partial_{x_j}}
\newcommand{\Om}{\Omega}
\newcommand\ubar[1]{\underaccent{\bar}{#1}}
\theoremstyle{plain}
\newtheorem{lemma}{Lemma}[section]
\newtheorem{prop}[lemma]{Proposition}
\newtheorem{thm}[lemma]{Theorem}
\newtheorem{cor}[lemma]{Corollary}
\newtheorem{remark}[lemma]{Remark}
\numberwithin{equation}{section}
\begin{document}
\maketitle
\begin{abstract}
We prove that at a finite singular time for the Harmonic Ricci Flow on a surface of positive genus both the energy density of the map component \emph{and} the curvature of the domain manifold have to blow up simultaneously. As an immediate consequence, we obtain smooth long-time existence for the Harmonic Ricci Flow with large coupling constant.
\end{abstract}

\section{Introduction and Results}
In this article we study Harmonic Ricci Flow (i.e.~Ricci Flow coupled with Harmonic Map Heat Flow), which was introduced by the first author in \cite{M09,M12}, with some special cases previously studied in \cite{List,Lott}. This coupled flow system is natural from both a geometric and an analytic point of view and is related to various other extended Ricci Flow systems, as well as to the Ricci Flow on warped products and problems arising in physics, see for example the discussion in the introduction of \cite{M09}. One of its key features is that on the one hand, in special situations, it behaves less singular than the two flows considered separately, while on the other hand a large portion of the Ricci Flow techniques carry over almost directly to this coupled system. For these reasons, this relatively new flow has gained the attention of many authors recently, studying the flow usually in general dimensions. Among some of the most interesting results are a compactness theorem for solutions of the coupled flow \cite{Wi15}, Sobolev inequalities along the flow \cite{Ab15b,FZ15}, gradient and heat kernel estimates \cite{B13,B13b,BT13,FZ15b}, eigenvalue estimates, entropies and other monotonicity formulas \cite{Ab14d,Ab15,GPT13,Li13,M10}, properties of soliton solutions \cite{GPT14b,Ta14,Ta15,Ta15b}, as well as various different Harnack inequalities \cite{CGT14,Fa13,GH14,GI14,GI14b,Wa12,Zh13}. The goal of the present paper is to focus on the special case where the domain manifold is two-dimensional where much stronger results can be obtained.\\

Let $M$ be a smooth closed oriented surface of positive genus $\gamma > 0$ and $(N,g_N)$ a smooth closed Riemannian manifold of arbitrary dimension. By Nash's embedding theorem, we can assume without loss of generality that $(N,g_N)$ is isometrically embedded in some $\R^n$. Let $g(t)$ be a family of smooth Riemannian metrics on $M$ and $\phi(t)$ a family of smooth maps from $M$ to $N$. We call $(g(t),\phi(t))$ a solution of the (volume preserving) Harmonic Ricci Flow \cite{M12}, if it satisfies
\begin{equation}\label{eq.RH}
\left\{\begin{aligned}
\pt g&=-2\Ric_g+2\alpha\; d\phi\tensor d\phi+\Big(\fint_M\tr_g(\Ric_g-\alpha\; d\phi\tensor d\phi) d\mu_g\Big)g =: T(\phi,g),\\
\pt \phi&=\tau_g(\phi),
\end{aligned}\right.
\end{equation}
where $\Ric_g$ denotes the Ricci curvature of $(M,g)$, $\tau_g(\phi)=\tr_g(\na d\phi)$ is the tension field of $\phi$ and $\alpha$ is a (possibly time-dependent) positive coupling constant. We will always assume that $\alpha(t)\in[\ubar{\alpha},\bar{\alpha}]$, where $0<\ubar{\alpha}\leq \bar{\alpha}<\infty$.\\

Typically, geometric flows can develop finite time singularities. In the case of the Harmonic Ricci Flow, this could be caused either by energy concentration of the map component $\phi(t)$ or by a degeneration of the evolving domain metric $g(t)$ -- or by both phenomena happening at the same time, as described more precisely in \eqref{eq.degenerations} below. However, the goal of this article is to prove that in the case where the domain manifold is two-dimensional and the coupling constant $\alpha(t)$ is sufficiently large, such finite time singularities cannot appear at all. Our main result is the following.

\begin{thm}\label{thm:large-alpha}
Let $\alpha(t)\in[\ubar{\alpha},\bar{\alpha}]$ be a smooth coupling function satisfying
\begin{equation}\label{eq.alphabound}
\ubar{\alpha}> 2\max \big\{ K(\tau)\mid \tau\subset T_pN \textrm{ two plane, } p\in N \big\},
\end{equation}
where $K(\tau)$ denotes the sectional curvature of the target manifold $N$ in direction $\tau$. Then every solution $(g,\phi)$ of \eqref{eq.RH} is defined and smooth for all times $t\geq 0$. 
\end{thm}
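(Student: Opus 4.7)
The strategy is a direct reduction to the paper's main result, announced in the abstract: at any finite singular time $T<\infty$ of \eqref{eq.RH}, both the map's energy density $e(\phi(t)):=\tfrac{1}{2}|d\phi|^2_{g(t)}$ and the Gaussian curvature of $(M,g(t))$ must blow up simultaneously. To exclude finite-time singularities it therefore suffices to establish a uniform pointwise bound on $e(\phi(t))$ on every compact interval of existence; combined with the dichotomy, this forces $T=\infty$, after which standard parabolic regularity gives smoothness for all $t\geq 0$.

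The pointwise bound comes from a maximum-principle argument on the evolution of $e(\phi)$ along \eqref{eq.RH}. I would combine the Eells--Sampson Bochner identity,
$$
\Delta_g\, e(\phi) = |\na d\phi|^2 + \langle\na\tau_g(\phi),d\phi\rangle + \Ric_g(d\phi,d\phi) - \mathcal{R}^N(d\phi),
$$
where $\mathcal{R}^N(d\phi):=\sum_{i,j}\langle R^N(d\phi(e_i),d\phi(e_j))d\phi(e_j),d\phi(e_i)\rangle$, with the variation of $g^{ij}$ driven by \eqref{eq.RH}, which produces the contribution $\Ric_g(d\phi,d\phi)-\alpha|d\phi\tensor d\phi|^2_g-\tfrac{1}{2}\lambda(t)|d\phi|^2$, with $\lambda(t):=\fint_M\tr_g(\Ric_g-\alpha\,d\phi\tensor d\phi)\,d\mu_g$. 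The hallmark of the coupling introduced in \cite{M09,M12} is that the $\Ric_g(d\phi,d\phi)$-terms cancel exactly, as do the $\langle\na\tau,d\phi\rangle$-pieces against the contribution from $\pt\phi=\tau_g(\phi)$. Using the dimension-$2$ inequalities $\mathcal{R}^N(d\phi)\leq\tfrac{1}{2}K_{\max}|d\phi|^4$ and $|d\phi\tensor d\phi|_g^2\geq\tfrac{1}{2}|d\phi|^4$, with $K_{\max}:=\max\{K(\tau)\mid\tau\subset T_pN,\,p\in N\}$, one then arrives at
$$
(\pt-\Delta_g)\,e(\phi) \;\leq\; -|\na d\phi|^2 \;+\; \tfrac{1}{2}(K_{\max}-\ubar\alpha)|d\phi|^4 \;+\; |\lambda(t)|\,e(\phi),
$$
and condition \eqref{eq.alphabound} makes the quartic coefficient strictly negative (with room to spare).

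At a spatial maximum of $e(\phi)$ the Laplacian, gradient, and quartic contributions all carry the favorable sign, so the weak maximum principle yields $\ddt\sup_M e(\phi(t))\leq|\lambda(t)|\sup_M e(\phi(t))$. To close the argument one only needs $|\lambda(t)|$ to be bounded on every finite interval: the $\fint_M\tr_g\Ric_g$-part is controlled by the Gauss--Bonnet theorem together with area preservation of the volume-normalized flow, and the $\alpha\fint_M|d\phi|^2$-part by the a priori $L^1$-energy bound inherited from the variational/monotonicity structure of Harmonic Ricci Flow. A Gr\"onwall argument then gives $\sup_M e(\phi(t))\leq e^{Ct}\sup_M e(\phi(0))$, precluding energy-density blow-up and hence, by the dichotomy, any finite singular time.

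The principal obstacle is the dichotomy itself, which is the paper's central technical contribution and which handles the opposite direction --- ruling out curvature blow-up without energy blow-up. Granted the dichotomy, the reduction above is essentially structural: the $\Ric_g(d\phi,d\phi)$-cancellation is the defining feature of the coupling, and \eqref{eq.alphabound} is precisely calibrated to make the leading-order maximum-principle term favorable.
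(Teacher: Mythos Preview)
Your proposal is correct and follows essentially the same approach as the paper: both reduce to the dichotomy of Theorem~\ref{thm:blow-up} and then rule out energy-density blow-up via the Bochner evolution inequality for $e(\phi)$, exploiting the cancellation of the $\Ric_g(d\phi,d\phi)$ terms and the sign of the quartic term under \eqref{eq.alphabound}. The only minor difference is that the paper uses the strictly negative quartic term to obtain a \emph{time-independent} bound $\sup_M e(\phi(t))\leq \max\{\sup_M e(\phi(0)),\,2(\bar\alpha\bar E(0)+1)/(\ubar\alpha - C_K)\}$, whereas your Gr\"onwall argument yields an exponential-in-$t$ bound; either suffices to exclude finite-time singularities.
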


In \cite[Corollary 5.3]{M12}, the first author of this article showed that if the coupling constant $\alpha(t)$ is smooth and bounded away from zero, then for a solution $(g,\phi)$ of the Harmonic Ricci Flow (in arbitrary dimension), energy concentration of $\phi(t)$ cannot happen as long as the curvature of $g(t)$ stays bounded. Here, we show that in the case of a two-dimensional domain, the converse holds as well. That is, it is not possible that the (Gauss) curvature $K_{g(t)}$ of $g(t)$ blows up while the energy density of $\phi(t)$ remains bounded. In particular, we have the following theorem.

\begin{thm}\label{thm:blow-up}
Let $(g,\phi)$ be a solution of \eqref{eq.RH} defined and smooth on a maximal time interval $[0,T)$ and with a smooth coupling function $\alpha$ that is bounded away from zero. If $T<\infty$, then both the map \emph{and} the metric component must blow up
\begin{equation}\label{eq.degenerations}
\limsup_{t\nearrow T}\max_{x\in M} \Abs{K_{g(t)}(x)}=\infty \quad\text{ and }\quad \limsup_{t\nearrow T}\max_{x\in M} \, \frac12 \Abs{d\phi(x,t)}_{g(t)}^2=\infty.
\end{equation}
\end{thm}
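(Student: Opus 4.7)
The plan is to prove the contrapositive: if at least one of $\max_M|K_{g(t)}|$ or $\max_M|d\phi|_{g(t)}^2$ stays bounded on $[0,T)$, then the flow extends smoothly past $T$, contradicting the maximality of the interval.

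The case where $|K_{g(t)}|$ remains bounded is handled by combining the already-known Corollary 5.3 of \cite{M12} with standard parabolic regularity theory. Indeed, \cite{M12} ensures that bounded curvature along \eqref{eq.RH} forces the energy density to remain bounded as well; once both are controlled, Shi-type higher-order estimates for the metric component together with standard parabolic estimates for the harmonic map heat flow component yield uniform $C^k$ bounds on $(g,\phi)$, hence smooth extendibility past $T$.

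The substance of Theorem \ref{thm:blow-up} is therefore the converse direction: assuming $\tfrac12|d\phi|_{g(t)}^2\le E$ on $[0,T)$, show that $|K_{g(t)}|$ stays bounded as well. I would proceed by computing the evolution equation of $K_{g(t)}$ under \eqref{eq.RH}, which in two dimensions simplifies considerably thanks to the identity $\Ric_g=K_g\,g$. Schematically this yields
\begin{equation*}
\pt K_g = \Delta_g K_g + (\text{quadratic in }K_g) + \alpha\,\mathcal F(\phi,\na\phi,\na^2\phi),
\end{equation*}
where $\mathcal F$ gathers the contributions of the coupling term $2\alpha\,d\phi\tensor d\phi$ and of the volume-preserving correction, and involves derivatives of $\phi$ up to second order.

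The central difficulty is controlling $\mathcal F$ under the sole assumption $|d\phi|_g^2\le 2E$, since second derivatives of $\phi$ are not given a priori. I expect this to be resolved by coupling the evolution of $K_g$ to that of $|d\phi|_g^2$: a Bochner-type computation along \eqref{eq.RH} yields
\begin{equation*}
\pt|d\phi|_g^2 = \Delta_g|d\phi|_g^2 - 2|\na d\phi|_g^2 + (\text{terms controlled by }E\text{ and }|K_g|),
\end{equation*}
whose good gradient term $-2|\na d\phi|_g^2$ can be used to absorb the worst contributions in $\mathcal F$. A suitable linear combination such as $F:=K_g+A\alpha|d\phi|_g^2$ (with $A$ sufficiently large, depending on $E$, $\ubar\alpha$, $\bar\alpha$, and the sectional curvature of $N$ on the compact image of $\phi$) should then satisfy a closed differential inequality of the form $\pt F\le \Delta_g F + C(1+F^2)$. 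A parabolic maximum principle argument, combined with the topological/integral identity $\int_M K_g\,\dm=2\pi\chi(M)$ (from Gauss--Bonnet, stable under the volume-preserving flow) and a Moser-type iteration in the spirit of the two-dimensional Ricci Flow literature, should yield both upper and lower bounds on $K_g$ up to time $T$. With $|K_g|$ under control, the first part of the argument applies and produces a smooth extension past $T$, contradicting maximality. The main obstacle throughout is ensuring that the signs and coefficients in the coupled differential inequality work out, which is where the specific two-dimensional structure and the positive lower bound $\ubar\alpha>0$ on the coupling are essential.
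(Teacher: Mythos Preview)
Your handling of the easy direction (bounded curvature $\Rightarrow$ bounded energy density) is correct and matches the paper's argument at the end of Section~\ref{sec.curv}.

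The hard direction, however, has a genuine gap. Even if a combination $F=K_g+A\alpha\abs{d\phi}_g^2$ satisfies $\pt F\le\Delta_g F+C(1+F^2)$, this inequality alone cannot yield an $L^\infty$ bound on $F$: the comparison ODE $\dot f=C(1+f^2)$ blows up in finite time, so the parabolic maximum principle gives nothing. This is not a technicality---it is exactly the obstruction already present in pure two-dimensional Ricci Flow, where $\pt K=\Delta K+2K^2$ and curvature bounds are \emph{not} obtained by maximum principle on $K$. Your fallback to ``Moser-type iteration in the spirit of the 2D Ricci Flow literature'' is where all the work lies, but you have not said how to start it: Moser iteration needs an integral bound such as $\int_0^T\!\int_M K_g^2\,d\mu_g\,dt<\infty$, and you give no mechanism for producing one from the energy-density bound.

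The paper's route is structurally different and addresses precisely this point. Rather than work with $K_g$ directly, it uniformises $g(t)=e^{2u(t)}g_0(t)$ with $g_0$ a horizontal curve of constant-curvature metrics, and controls the two pieces separately. The horizontal part $g_0$ is handled via Teichm\"uller-theoretic estimates (Lemma~\ref{lemma:inj-rad-g0} and Corollary~\ref{corB}), which give a uniform lower bound on $\inj(M,g_0)$ and $C^k$ control on $\pt g_0$---this entire layer is absent from your sketch, and without it the background geometry on which you would run any PDE argument is uncontrolled. The conformal factor $u$ is then handled by Struwe's Liouville-energy method: one shows $\ddt E_L\le-\tfrac12\int(K_g-\bar K)^2\,d\mu_g+CE_L+C\norm{\tau_g\phi}_{L^2}^2+C$, which after Gr\"onwall yields both an $H^1$ bound on $u$ and the crucial space-time $L^2$ bound on $K_g$. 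From there one bootstraps to $H^2$ and then $C^{2,1;\beta}$ control on $u$, and the curvature bound drops out of the Gauss equation $K_g=e^{-2u}(\bar K-\Delta_{g_0}u)$. In short, the missing idea is to trade the curvature for the conformal factor and to recognise that the evolving conformal class must itself be controlled.
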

Theorem \ref{thm:large-alpha} then follows directly from Theorem \ref{thm:blow-up}, as the assumption \eqref{eq.alphabound} prevents $\abs{d\phi(x,t)}_{g(t)}^2$ from blowing up. In fact, by the Bochner-formula, $\abs{d\phi(x,t)}_{g(t)}^2$ is uniformly bounded (in space and time) in terms of its initial value and $\bar{\alpha}$, $\ubar{\alpha}$ satisfying \eqref{eq.alphabound}. For the unnormalised flow, this was derived in \cite[Proposition 5.6]{M12}. It is easy to check that a similar argument holds for the normalised flow as well; we will carry this out in the very last section of this article for completeness.\\

Observe that since $M$ is a surface, we have $\Ric_g=\half R_g\cdot g=K_g\cdot g$, where $R_g$ is the scalar and $K_g$ the Gauss curvature of $g$. Consequently, the mean value $\fint_M\tr_g(\Ric) d\mu_g=2(\Vol(M,g))^{-1}\cdot \int_M K_g d\mu_g$ depends only on the genus of the surface and on the volume of $(M,g)$, which is constant along the flow \eqref{eq.RH}. Thus, we will always assume that the volume of the initial surface is equal to the one of surfaces of constant Gauss curvature
\begin{equation}\label{eq.Rbar}
\bar{K} = \begin{cases}
-1, & \text{ if }\gamma\geq 2,\\
0, & \text{ if }\gamma=1,
\end{cases}
\end{equation}
while assuming unit volume in the torus case. That is, we assume that the volume of the initial surface $(M,g(0))$, and hence any $(M,g(t))$ is given by
\begin{equation}\label{eq.initialvolume}
\Vol(M,g) = \begin{cases}
4\pi(\gamma-1), & \text{ if }\gamma\geq 2,\\
1, & \text{ if }\gamma=1.
\end{cases}
\end{equation}
This assumption implies that $\fint_M\tr_g(\Ric) d\mu_g=\bar{R}=2\bar{K}$. Proving our results only for these initial manifolds constitutes no loss of generality, as the result for initial data of general volume then simply follows by rescaling. More precisely, by rescaling both the metric and the time variable of the map component, we obtain a flow similar to \eqref{eq.RH} but with an additional constant factor in front of the tension field in the second equation. This factor however does not non-trivially influence the arguments that follow and we will therefore not write it for the sake of readability of the arguments.\\

Let us now give an overview of the proof of Theorem \ref{thm:blow-up} and an outline of this paper. The main idea behind our analysis of the flow \eqref{eq.RH} is that, as explained in Section \ref{sec.split}, one can always split a flow of metrics on a surface into a \emph{conformal} part, the \emph{pull-back by diffeomorphisms} and a \textit{horizontal} movement. After pulling back with a suitable family of diffeomorphisms, we can thus obtain a new flow (again denoted by $(g,\phi)$), which is constructed in such a way that $g(t)$ is of the form 
\begin{equation}\label{1.gg0}
g(t)=e^{2u(t)}g_0(t),
\end{equation}
where $g_0(t)$ moves only in horizontal direction, see Section \ref{sec.split} for details. This new flow solves the evolution equations 
\begin{equation}\label{eq.flow}
\left\{\begin{aligned}
\pt g&=T(\phi,g)-L_Xg,\\
\pt \phi&=\tau_g(\phi)-d\phi\cdot X,
\end{aligned}\right.
\end{equation}
where $X$ is the generating vector field of the diffeomorphisms.\\

As we shall see, the evolution of the horizontal curve $g_0$ is determined mainly by the trace-free part $\tf{T}(\phi,g)$ of the tensor 
\begin{equation}\label{eq.Trewritten}
T(\phi,g):=2\big(\bar{K}-K_g-\alpha \bar E(t)\big)\cdot g + 2\alpha\; d\phi\tensor d\phi,
\end{equation}
where we note that  $\tf T(\phi,g)=2\alpha(d\phi\tensor d\phi -e(\phi,g)g)$ is 
a priori bounded in $L^\infty$ under the assumption that the energy density is uniformly bounded, and $g_0$ can be controlled by methods developed by P. Topping and the second author in \cite{RT, R-existence, RT-neg, RT-horizontal}. Here and in the following, we use the standard notation for the energy density and total Dirichlet energy of $\phi$, namely $e(\phi,g):=\half\abs{d\phi}_g^2$ and $E(\phi,g):=\half\int_M \abs{d\phi}_g^2\; d\mu_g$, as well as the mean energy density $\bar{E}(\phi,g)=\half\fint_M \abs{d\phi}_g^2 \; d\mu_g = E(\phi,g)/\Vol(M,g)$. Conversely, the trace part of $T(\phi,g)$ essentially causes only a conformal movement, which we will control by more classical methods, inspired by Struwe's analysis \cite{Struwe} of two-dimensional Ricci Flow.\\

Since a solution of \eqref{eq.RH} and the corresponding solution of \eqref{eq.flow} differ only by the pull-back by diffeomorphisms, geometric quantities such as the energy density or the curvature agree and we can derive Theorem \ref{thm:blow-up} from the following key result.

\begin{prop}\label{prop:main-result}
Let $(g,\phi)$ be a solution of \eqref{eq.flow} (where the choice of $X$ is explained in Section \ref{sec.split}) and assume that on an interval $[0,T)$, $T<\infty$, the energy density of $\phi$ is uniformly bounded, that is
\begin{equation} \label{ass:energy-density} 
\sup_{x\in M, t\in [0,T)} \frac12 \Abs{d\phi(x,t)}_{g(t)}^2<\infty.
\end{equation}
Then both the curvature and the injectivity radius of $g(t)$ are uniformly bounded, 
\begin{equation*}
\sup_{x\in M, t\in [0,T)} \Abs{K_{g(t)}(x)}<\infty, \qquad\text{ and }\qquad \inf_{t\in [0,T)}\inj(M,g(t)) >0,
\end{equation*}
and thus the solution $(g,\phi)$ of \eqref{eq.flow} can be extended smoothly past time $T$.
\end{prop}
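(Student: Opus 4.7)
The plan is to exploit the splitting \eqref{1.gg0} of $g(t)=e^{2u(t)}g_0(t)$ in order to separately control the conformal factor $u$ and the horizontal curve $g_0$, which are driven respectively by the trace and trace-free parts of $T(\phi,g)$. Under the hypothesis \eqref{ass:energy-density}, the trace-free tensor $\tf{T}(\phi,g)=2\alpha(d\phi\tensor d\phi-e(\phi,g)g)$ is uniformly bounded pointwise, while the trace part of $T(\phi,g)$ involves the curvature $K_g$ but can be rewritten, via $K_g=e^{-2u}(K_{g_0}-\Deltao u)$, entirely in terms of $u$, $g_0$ and data that is already controlled.

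The first and technically most delicate step is to show that the horizontal curve $g_0(t)$ remains in a compact subset of the space of constant-curvature metrics of volume \eqref{eq.initialvolume} on $M$. Since $g_0$ evolves purely horizontally, its velocity is determined by projecting the driving tensor of \eqref{eq.flow} onto the horizontal subspace at $g_0$, hence is controlled in $L^\infty$ by $\tf{T}(\phi,g)$. Using the horizontal-curve machinery developed by Rupflin and Topping in \cite{R-existence,RT-horizontal,RT-neg}, together with the quantitative collar estimates of \cite{RT} in the hyperbolic case, I expect to deduce that $g_0(t)$ does not degenerate as $t\nearrow T$: in particular that, for $\gamma\ge 2$, no closed geodesic of $(M,g_0(t))$ shrinks to zero length over $[0,T)$. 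This yields uniform two-sided bounds on $K_{g_0(t)}$ and a uniform positive lower bound on $\inj(M,g_0(t))$. Collar degeneration is the main obstacle here; ruling it out requires combining the pointwise bound on $\tf T$ with a careful analysis of the horizontal projection near the boundary of moduli space, so that the length of any short geodesic can only decrease at a controlled, sublinear rate.

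Once $g_0$ is controlled, the conformal factor $u$ satisfies a scalar parabolic equation of logarithmic fast-diffusion type, derived from the trace component of \eqref{eq.flow}; it takes roughly the form
\begin{equation*}
\pt u=e^{-2u}\bigl(\Deltao u-K_{g_0}\bigr)+\bar K+\alpha\bigl(e(\phi,g)-\bar E(t)\bigr)-\tfrac12 \Div X,
\end{equation*}
where each term on the right other than $u$ itself is now bounded. Following Struwe's two-dimensional Ricci flow analysis \cite{Struwe}, a maximum principle applied to $u$, together with suitable auxiliary quantities such as $e^{-2u}$, yields uniform upper and lower pointwise bounds on $u$ on $[0,T)$. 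Standard linear parabolic regularity on the (now uniformly controlled) background $g_0(t)$ then upgrades these to $C^k$-bounds on $u$ for every $k$.

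Combining the two steps, $K_{g(t)}=e^{-2u}(K_{g_0}-\Deltao u)$ is uniformly bounded and $\inj(M,g(t))$ is bounded below by a positive constant on $[0,T)$. Together with \eqref{ass:energy-density} and the higher-order estimates for the map component obtained from the Bochner formula as in \cite[Corollary 5.3]{M12}, this provides all the $C^k$-bounds on $(g,\phi)$ needed to apply the short-time existence result for \eqref{eq.RH} at time $T$, thereby extending the solution smoothly beyond $T$ and completing the proof.
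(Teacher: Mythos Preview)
Your overall plan correctly identifies the splitting $g=e^{2u}g_0$ and the need to control $g_0$ and $u$ separately. The treatment of the horizontal curve $g_0$ via the machinery of \cite{RT-horizontal,RT-neg}, and the final passage from $C^2$-control of $u$ to curvature and injectivity-radius bounds via the Gauss equation, are both in line with the paper.

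The genuine gap is in your proposed control of $u$ by a maximum principle. The evolution equation for $u$ (see \eqref{eq:u}) contains, beyond the terms you wrote, the gauge contributions $\tfrac12\varrho-du(X)$ arising from the pull-back by diffeomorphisms that renders $g_0$ horizontal; your term ``$-\tfrac12\Div X$'' is precisely $\tfrac12\varrho$ (since taking the $g_0$-trace of \eqref{2.splittingtfT} gives $\varrho=-\Div_{g_0}X$), but $-du(X)$ is missing. More importantly, the claim that ``each term on the right other than $u$ itself is now bounded'' is not justified: from the elliptic characterisations \eqref{eq:rho}--\eqref{eq:X} together with the $L^\infty$-bound on $e^{-2u}\,\tf T(\phi,g)$ one obtains only $\varrho\in L^p(M)$ and $X\in W^{1,p}(M)$ for every finite $p$ (Lemma~\ref{lemma:rho-X}), \emph{not} $\varrho\in L^\infty$. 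A pointwise maximum principle for $u$ therefore does not apply. This is exactly why the paper proceeds by energy methods instead: it bounds the Liouville energy (Proposition~\ref{prop.Liouville}) to obtain $H^1$-control of $u$, uses Moser--Trudinger for exponential integrability (Corollary~\ref{Cor:Moser-Trudinger}), derives $H^2$-bounds (Proposition~\ref{prop:H2-u}), and only then bootstraps through parabolic $L^p$- and Schauder theory (Section~\ref{sec.curv}) to reach $C^2$-control. Note also that Struwe's argument in \cite{Struwe} is itself based on the Liouville energy rather than a maximum principle; in the pure two-dimensional Ricci flow a maximum principle happens to work because no gauge terms are present, but here the diffeomorphism correction introduces the non-pointwise-bounded quantity $\varrho$ and spoils that route.
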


The proof of Proposition \ref{prop:main-result} starts in Section \ref{sec.horizontal}, where we show that the evolution of the underlying conformal structure, described by the horizontal curve $g_0(t)$, is well controlled and in particular that the injectivity radius of $g_0(t)$ is a priori bounded away from zero on any given time interval of finite length. This corresponds to saying that $g_0(t)$ does not degenerate in Teichm\"uller space and is trivially true for surfaces of genus $\gamma=1$ as their Teichm\"uller space is complete with respect to the Weil-Petersson metric and the $L^2$-length of $t \mapsto g_0(t)$ is bounded, compare \eqref{est:uniform-Linfty-T}. In the case of genus at least two, more work is needed. Intuitively, the evolution of $g_0(t)$ is driven by the horizontal component of the trace-free part of $T(\phi,g)$, which is
\begin{equation} 
\tf T(\phi,g)=2\alpha\big(d\phi\tensor d\phi-e(\phi,g)g\big)=\alpha\Real(\Phi(\phi,\mathbbm{c})),
\end{equation}
where $\Phi(\phi,\mathbbm{c})=(\vert \phi_x\vert^2-\vert\phi_y\vert^2-2i\langle \phi_x,\phi_y\rangle) dz^2$, $z\in \mathbbm{c}$, is the Hopf-differential of $\phi$ on the Riemann surface $(M,\mathbbm{c})$, and $\mathbbm{c}$ is a complex structure compatible with $g$ (and thus $g_0$). We can thus apply techniques developed for the study of Teichm\"uller Harmonic Map Flow by  Peter Topping and the second author, see \cite{RT,R-existence, RT-neg,RT-horizontal, RT-global}.\\

In Section \ref{sec.struwe}, based on the estimates obtained for $g_0(t)$, we then analyse the evolution of the conformal factor $u(t)$ in \eqref{1.gg0} following the approach of Struwe \cite{Struwe}. Let us remark that while the Dirichlet energy of the map component is decreasing according to 
\begin{equation}\label{energy-decay}
\begin{aligned}
\ddt E(\phi,g)&=-\int_M\pt \phi\cdot \tau_g(\phi)d\mu_g-\frac{1}{4\alpha}\int_M\Scal{ \tf T(\phi,g),\pt g}_{g}d\mu_g\\
&=-\Norm{\tau_g(\phi)}_{L^2(M,g)}^2-\frac{1}{4\alpha}\Norm{\tf T(\phi,g)}^2_{L^2(M,g)},
\end{aligned}
\end{equation}
the Liouville energy which controls the conformal factor $u(t)$ is not monotonically decreasing (unlike in the classical Ricci Flow case). Nevertheless, we can still derive bounds on the Liouville energy and thus on the $H^1$-norm of $u$ and then, in a second step, also derive $H^2$-bounds on $u$. All of this is carried out with respect to the evolving metric $g_0(t)$.\\

Finally, in Section \ref{sec.curv}, higher regularity estimates for $u$ are derived via bootstrapping arguments. In view of the Gauss equation
\begin{equation}\label{Gausseq}
K_g = e^{-2u}\big(\bar{K}-\Deltao u\big),
\end{equation}
these estimates will then yield the desired curvature estimates and hence finish the proof of Proposition \ref{prop:main-result} (and thus also of the Theorems \ref{thm:large-alpha} and \ref{thm:blow-up}).\\

Let us remark that there are initial conditions for which the flow cannot both exist smoothly for all time and converge smoothly as $t\to\infty$. Hence, at time infinity, some singularities must occur for these initial data. As the energy density of the map component stays uniformly bounded, bubbling in the map component cannot happen and hence the domain must develop these singularities. The precise asymptotics will be studied elsewhere.

\begin{remark}
We exclude the case where the domain manifold is a two-sphere from this article as it requires different methods. While the Teichm\"{u}ller space of a two-sphere consists only of one point and thus there are no horizontal movements to be considered, an additional difficulty arises from the non-compactness of the M\"{o}bius group of oriented conformal diffeomorphisms, which in particular prevents the argument in Section \ref{sec.struwe} from being carried over to this case directly. Moreover, as the Liouville energy is not decreasing, it seems that also Struwe's gauge fixing trick \cite{Struwe} cannot be adapted directly to this situation.
\end{remark}

\textbf{Acknowledgements.} Parts of this work were carried out during a visit of RB at the University of Leipzig as well as a visit of MR at Queen Mary University of London. We would like to thank the two universities for their hospitality. Furthermore, we would like to thank the referees for their comments which helped us improve this article. RB also acknowledges financial support from the EPSRC grant number EP/M011224/1.

\section{Splitting the flow into its conformal, horizontal and Lie-derivative parts}\label{sec.split}
In this section, we will explain how the two-dimensional Harmonic Ricci Flow -- or in fact any flow of Riemannian metrics on a closed surface -- can be split into evolutions in conformal, horizontal and Lie-derivative directions.\\

In the following, we let $\Sym_{+}^2(T^*M)$ denote the space of smooth Riemannian metrics on $M$ and let $\M$ be the subset of metrics with constant Gauss curvature $\bar{K}=c\in\{-1,0\}$, as in \eqref{eq.Rbar}, which also satisfy the volume constraint \eqref{eq.initialvolume}.

\subsection{Splitting general flows on surfaces of positive genus}
Let $M$ be a closed oriented surface of genus $\gamma>0$. 
Following the approach of Tromba \cite{Tromba}, we first recall how 
 the space of symmetric two-tensors $\Sym^2(T^*M)$ (which is the tangent space to the space of all Riemannian metrics on $M$)
splits into subspaces that correspond to the evolutions by conformal changes, pull-backs by diffeomorphisms or horizontal curves, respectively, we refer to \cite{Tromba} for details.\\

By the uniformisation theorem, every metric $g$ on $M$ is conformal to a metric $\bar{g}\in\M$. Note that under the volume constraint \eqref{eq.initialvolume}, the constant Gauss curvature metric $\bar{g}$ is unique.  We split $\Sym^2(T^*M)$ into the following direct sum:
\begin{equation}\label{2.prelimsplit}
\Sym^2(T^*M) = \Conf(\bar{g}) \oplus T_{\bar{g}}\M,
\end{equation}
where $\Conf(\bar{g}):=\{\varrho\cdot \bar{g} \mid \varrho\in C^\infty(M,\R)\}$. In order to split $T_{\bar{g}}\M$ further, denote by $\delta_{\bar{g}}$ the divergence acting on $\Sym^2(T^*M)$ with adjoint $\delta_{\bar{g}}^*$. Recall that since $\delta_{\bar{g}}^*$ is overdetermined elliptic, we have the $L^2$-orthogonal splitting
\begin{equation}\label{eq.barsplit}
T_{\bar{g}}\M=\im \delta_{\bar{g}}^* \oplus \big(\ker \delta_{\bar{g}} \cap T_{\bar{g}}\M\big).
\end{equation}
As $\delta_{\bar{g}}^*X = - L_X\bar{g}$, we obtain that
\begin{equation*}
\im \delta_{\bar{g}}^* = \big\{L_X\bar{g} \mid X\in\Gamma(TM)\big\}
\end{equation*}
is the space of Lie derivatives of $\bar{g}$. Moreover, it is well known that the horizontal space
\begin{equation*}
\big(\ker \delta_{\bar{g}} \cap T_{\bar{g}}\M\big) =: \Hol(\bar{g})=\big\{h\in\Sym^2(T^*M) \mid \delta_{\bar{g}}(h)=0=\tr_{\bar{g}}(h)\big\}
\end{equation*}
consists of the real parts of holomorphic quadratic differentials. Recall also that $\Hol(\bar{g})$ is $L^2$-orthogonal to $\Conf(\bar{g})$, which means that overall we have a splitting
\begin{equation}\label{2.mainsplit}
\Sym^2(T^*M) = \big(\Conf(\bar{g}) \oplus \im \delta_{\bar{g}}^*\big) \oplus \Hol(\bar{g}),
\end{equation}
where the last factor splits off $L^2$-orthogonally. This last property allows us to obtain the horizontal component (and this component only) by an orthogonal projection, which we denote by $P_g^\Hol:\Sym^2(T^*M)\to \Hol(g)$.
\begin{remark}\label{rem.normalX}
In the case of genus at least two, $X \mapsto L_X\bar{g}$ is injective, as there are no Killing fields. Hence the above splitting \eqref{2.mainsplit} uniquely determines $X$. For genus one, there is a two-dimensional space of Killing fields, but we can determine $X$ uniquely if we require the normalisation
\begin{equation}\label{2.normalisationX}
m(X) := \int_M P_{x,y}(X(x)) \; d\mu_{\bar{g}}(x) = 0,
\end{equation}
where $P_{x,y}(X(x))$ denotes the parallel transport of $X(x)\in T_xM$ to the \emph{fixed} tangent space $T_yM$ for any choice of $y\in M$. Viewing a flat torus as parallelogramm $P\subset \R^2$ with opposite sides identified this is equivalent to requiring that the integral over $P$ of the corresponding components of the vectorfield is zero. 
\end{remark}

This splitting of $\Sym^2(T^*M)$ translates to a corresponding splitting of a geometric flow as follows. Let $(\tilde g(t))_{t\in[0,T)}$ be any smooth curve of metrics satisfying the volume constraint \eqref{eq.initialvolume}. Then, by the uniformisation theorem, there exists a unique curve of metrics $(\bar g(t))_{t\in[0,T)}\subset \M$ and a unique function $v\in C^\infty(M\times[0,T))$ such that 
\begin{equation}\label{eq.Deftildeg}
\tilde{g}(t)=e^{2v(t)}\bar{g}(t).
\end{equation}
The splitting of the tangent space $T_{\bar g}\M=\im \delta_{\bar{g}}^*\oplus \Hol(\bar g)$ described above then yields the following result.

\begin{lemma}\label{2.exg0lemma}
Let $\bar g$ be a smooth curve in $\M$. Then there exists a unique smooth family of diffeomorphisms $f_t:M\to M$, generated by a vectorfield satisfying \eqref{2.normalisationX} in case $\gamma=1$, with $f_0=\id$ and a unique \emph{horizontal} curve $(g_0(t))_{t\in[0,T)}$, i.e. a curve of metrics satisfying
\begin{equation*}
\pt g_0(t)\in \Hol(g_0(t)),\quad\text{ for every }t\in[0,T),
\end{equation*}
such that $\bar{g}(t)=f_t^* g_0(t)$.
\end{lemma}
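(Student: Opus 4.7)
The plan is to read off the generator of $f_t$ directly from the splitting \eqref{eq.barsplit} applied to $\pt \bar g(t)$. Assume for the moment that such $f_t$ and $g_0$ exist, and let $X_t$ be the generating vector field of $f_t$ (so $\pt f_t = X_t \circ f_t$). Differentiating $\bar g(t) = f_t^* g_0(t)$ yields
\begin{equation*}
\pt \bar g = f_t^*(\pt g_0) + f_t^*(L_{X_t} g_0) = f_t^*(\pt g_0) + L_{\tilde X_t} \bar g,
\end{equation*}
where $\tilde X_t := f_t^* X_t = (df_t)^{-1}\!\cdot\! X_t \circ f_t$. Since $f_t:(M,\bar g(t)) \to (M,g_0(t))$ is by construction an isometry, the operator $f_t^*$ preserves the trace and the divergence and thus maps $\Hol(g_0(t))$ to $\Hol(\bar g(t))$. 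Hence $\pt g_0(t) \in \Hol(g_0(t))$ is equivalent to $f_t^*(\pt g_0) \in \Hol(\bar g)$, and the identity above is then precisely the decomposition of $\pt \bar g(t) \in T_{\bar g(t)}\M$ according to \eqref{eq.barsplit}.

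Guided by this heuristic, I would construct the objects in reverse order. At each $t$ apply \eqref{eq.barsplit} to $\pt \bar g(t)$ to obtain the unique decomposition
\begin{equation*}
\pt \bar g(t) = L_{\tilde X_t} \bar g(t) + h(t), \qquad h(t) \in \Hol(\bar g(t)),
\end{equation*}
where for $\gamma \geq 2$ the vector field $\tilde X_t$ is determined uniquely (Remark \ref{rem.normalX}) and for $\gamma = 1$ it is fixed by imposing the normalisation \eqref{2.normalisationX}. Elliptic regularity for the overdetermined operator $\delta^*_{\bar g}$, combined with smoothness of $t \mapsto \bar g(t)$, shows that $t \mapsto \tilde X_t$ is smooth. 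I would then define $f_t$ via the (non-autonomous, linear-in-$df_t$) ODE
\begin{equation*}
\pt f_t(p) = (df_t)_p\big(\tilde X_t(p)\big), \qquad f_0 = \id,
\end{equation*}
which is globally solvable in $t$ on the compact manifold $M$ and yields a smooth family of diffeomorphisms, whose generator $X_t$ satisfies $X_t \circ f_t = df_t \cdot \tilde X_t$, i.e.\ $f_t^* X_t = \tilde X_t$. Setting $g_0(t) := (f_t^{-1})^* \bar g(t)$, reversing the computation of the previous paragraph gives $f_t^*(\pt g_0(t)) = \pt \bar g(t) - L_{\tilde X_t} \bar g(t) = h(t) \in \Hol(\bar g(t))$, and applying $(f_t^{-1})^*$ yields $\pt g_0(t) \in \Hol(g_0(t))$, as required; that $g_0(t)$ stays inside $\M$ follows since $(f_t^{-1})^* \bar g(t)$ has the same constant Gauss curvature and volume as $\bar g(t)$.

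For uniqueness, a second pair $(\hat f_t, \hat g_0)$ produces, through the first paragraph, an $\im \delta^*_{\bar g}$-component $L_{\hat f_t^* \hat X_t} \bar g$ in the splitting of $\pt \bar g(t)$; uniqueness of \eqref{eq.barsplit} (together with the normalisation \eqref{2.normalisationX} when $\gamma = 1$) forces $\hat f_t^* \hat X_t = \tilde X_t$, the ODE above then has a unique solution, and one concludes $\hat f_t = f_t$ and $\hat g_0 = g_0$. The only slightly delicate technical point is ensuring smooth dependence and global-in-$t$ existence of the flow of $\tilde X_t$; once this is granted — via compactness of $M$, smoothness of $\bar g(t)$, and elliptic regularity for $\delta^*_{\bar g}$ — the remainder is essentially algebraic manipulation of the splitting \eqref{eq.barsplit}.
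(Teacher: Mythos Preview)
Your proposal is correct and follows essentially the same route as the paper: decompose $\pt\bar g$ via \eqref{eq.barsplit}, read off the vector field $\tilde X_t$ (the paper calls it $\bar X$), integrate to get the diffeomorphisms, and then recover $g_0$. The only cosmetic difference is that the paper phrases the ODE for the \emph{inverse} family, noting that $f_t^{-1}$ is generated by the known field $-\tilde X_t$, which makes global solvability on the compact $M$ immediate as a standard flow; your equation $\pt f_t(p)=(df_t)_p\tilde X_t(p)$ is equivalent to this (and is solved precisely by inverting that flow), so your appeal to compactness is justified, but you might want to make this reduction explicit rather than calling it an ODE ``linear in $df_t$''.
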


\begin{proof}
Let $\bar{g}(t)$ be a smooth curve in $\M$. Then by \eqref{eq.barsplit}, there exists a unique vectorfield $\bar X(t)$, in case of a torus additionally normalised by \eqref{2.normalisationX}, and a unique tensor $h(t)\in \Hol(\bar g(t))$ so that 
\begin{equation*}
\pt \bar g=h+L_{\bar X}\bar g.
\end{equation*}
On the other hand, given any family of diffeomorphisms $f_t$ generated by some vectorfield $X$ and a curve of metrics $g_0$ with $\bar g=f_t^*g_0$ we  write
\begin{equation*}
\pt \bar g=f_t^*\big(\pt g_0 + L_Xg_0\big)=f_t^*(\pt g_0)+L_{f_t^*X}\bar g
\end{equation*}
and recall that $f_t^*\Hol(g_0)=\Hol(\bar g)$. Thus $g_0$ is horizontal if and only if 
\begin{equation*}
f_t^*(\pt g_0(t))=h(t)=P_{\bar g(t)}^{\Hol}(\pt \bar g(t)), \quad \text{ for every } t,
\end{equation*}
and consequently $
L_{f_t^*X}\bar g=L_{\bar X}\bar g$. Note that a vectorfield $X$ satisfies \eqref{2.normalisationX} for $g_0$ precisely if $f_t^* X$ satisfies \eqref{2.normalisationX} for $\bar g$, so we must indeed have that
\begin{equation*}
f_t^*X=\bar X.
\end{equation*}
This allows us in a first step to determine the desired smooth family of diffeomorphisms $f_t$, e.g. by using that their inverses are generated by $-f_t^*X=-\bar X$. In a second step, we then obtain the smooth horizontal curve $g_0$ by integrating
\begin{equation*}
\pt g_0=(f_t^{-1})^*P_{\bar g}^{\Hol}(\pt \bar g)=(f_t^{-1})^*\big(\pt \bar g-L_{\bar X}\bar g\big).\qedhere
\end{equation*}
\end{proof}

Now, given $f_t$ as in the above lemma, notice that $\tilde{g}(t)$ as in \eqref{eq.Deftildeg} can be rewritten as
\begin{equation}
\tilde g(t)=e^{2v}f_t^*g_0=f_t^*(e^{2u}g_0),  \quad u=v\circ f_t^{-1}.
\end{equation}
Hence, instead of directly considering a solution $(\tilde{g}(t),\tilde{\phi}(t))_{t\in[0,T)}$ of the original flow equation \eqref{eq.RH}, we can instead analyse the pulled-back flow
\begin{equation*}
g(t)=(f_t^{-1})^*\tilde g(t)=e^{2u}g_0(t), \qquad \phi(t) =\tilde{\phi}(t) \circ f_t^{-1},
\end{equation*}
which has the advantage that the metric component of this flow only moves by conformal changes and horizontal movements. As mentioned in the introduction, if $(\tilde{g},\tilde{\phi})$ satisfies the flow equation \eqref{eq.RH}, then the pulled-back flow $(g,\phi)$ satisfies the evolution equation \eqref{eq.flow}, where $X$ is the vector field generating the diffeomorphisms $f_t$ of Lemma \ref{2.exg0lemma}.

\subsection{Equations for the components of Harmonic Ricci Flow}
From the above discussion, and in particular \eqref{eq.flow}, we obtain the evolution equation
\begin{align*} 
\pt g_0&=e^{-2u}\big(\pt g-2\pt u\cdot g\big)=e^{-2u}\big(T(\phi,g)-L_X(e^{2u}g_0)-2\pt u\cdot g\big)\\
&=e^{-2u}\,\tf T(\phi,g)-L_Xg_0+\big({-2\pt u}+\half\tr_g(T(\phi,g))-2du(X)\big)\cdot g_0,
\end{align*}
or, in the order of \eqref{2.mainsplit}, 
\begin{equation}\label{2.splittingtfT}
e^{-2u}\,\tf T(\phi,g) = (\varrho g_0 + L_Xg_0) + \pt g_0,
\end{equation}
where 
\begin{equation}\label{eq.rho}
\varrho = 2\pt u-\half\tr_g(T(\phi,g))+2du(X).
\end{equation}
As we know from the above that $\Hol(g_0)$ stands $L^2(M,g_0)$-orthogonal to both the space of all Lie-derivatives and to all conformal directions, we can obtain the velocity $\pt g_0$ of the horizontal curve $g_0$ as the $L^2(M,g_0)$-orthogonal projection $P_{g_0}^\Hol$ onto $\Hol(g_0)$ of the tracefree part of $e^{-2u}T(\phi,g)$,
\begin{equation}\label{2.dtg0}
\pt g_0=P_{g_0}^\Hol\big(e^{-2u}\,\tf T(\phi,g_0)\big)=\Real \big(P_{g_0}^\newhol(\alpha e^{-2u}\Phi(\phi,g_0))\big)
\end{equation}
or equivalently as the real part of the projection $P_{g_0}^\newhol$ of the (weighted) Hopf-differential $\Phi(\phi,g_0)$ to the space of holomorphic quadratic differentials $\newhol(M,g_0)$.\\

Next, we characterise $\varrho$ and $X$ in \eqref{2.splittingtfT}. From \eqref{2.prelimsplit}, we know that for $t\in[0,T)$, we have\begin{equation*}
e^{-2u(t)}\,\tf T(\phi,g)(t)-\varrho(t)g_0(t)\in T_{g_0(t)}\M
\end{equation*}
and hence, we know that $(DR)(g_0)(e^{-2u}\,\tf T(\phi,g)-\varrho(t)g_0)=0$. Since for any metric $g_0\in \M$ 
\begin{equation*}
DR(g_0)(k)=-\Delta_{g_0}(\tr_{g_0}(k))+\delta_{g_0}\delta_{g_0}k-\bar{K}\tr_{g_0}(k),
\end{equation*}
where $\bar{K}\in\{-1,0\}$ as in \eqref{eq.Rbar}, we can characterise $\varrho(t)$ equivalently as the unique solution of the elliptic PDE
\begin{equation} \label{eq:rho}
-\Delta_{g_0}\varrho-2\bar{K}\cdot \varrho=\delta_{g_0}\delta_{g_0}\big(e^{-2u}\,\tf T(\phi,g)\big) \quad \text{with} \quad \int_M \varrho\, \dmo = 0.
\end{equation}
We remark that the condition $\int_M \varrho\, \dmo = 0$ is automatically satisfied in the case of genus $\gamma \geq 2$, but is required in the torus case due to our volume constraint. We will see that this characterisation of $\varrho$ not only implies that $\varrho$ is smooth on $M \times [0,T)$, provided that both $g$ and $\phi$ are smooth, but it will also allow us to derive estimates on Sobolev norms of $\varrho$ in terms of the appropriate Sobolev norms of the right hand side. Furthermore, thanks to the control on the underlying constant curvature metric $g_0$ derived in the next section, such estimates will be valid with uniform constants.\\

Next, using \eqref{2.splittingtfT} and \eqref{2.dtg0}, and recalling that tensors in $\Hol(g_0)$ are divergence-free, we know that the vector field $X\in\Gamma(TM)$ satisfies $e^{-2u }\tf T(\phi,g)-\varrho g_0-L_Xg_0\in \Hol(g_0)$ and thus\begin{equation} \label{eq:X}
\delta_{g_0}\delta_{g_0}^*X=-\delta_{g_0}L_Xg_0 =-\delta_{g_0}\big(e^{-2u}\,\tf T(\phi,g)-\varrho g_0\big).
\end{equation}
If $(M,g_0)$ has negative curvature, there are no non-trivial Killing-fields and thus $X=X(\phi,g)$ is uniquely determined by the above equation. On a torus, this is still true up to normalisation of $X$ as explained in Remark \ref{rem.normalX} above.\\

We point out that so far, we have never used the precise definition of $T(\phi,g)$, but only assumed that it is a diffeomorphism invariant tensor, which is smooth if $g$ and $\phi$ are smooth. All in all, we have thus proved the following proposition.

\begin{prop}
Let $T(\phi,g)$ be so that $T(f^*(\phi,g))=f^*(T(\phi,g))$ for every diffeomorphism $f:M\to M$, $(g(t)=e^{2u(t)}g_0(t), \phi(t))$ be a solution of \eqref{eq.flow} on a surface of genus $\gamma\geq 1$ where $g_0$ is a horizontal curve. Then, the following evolution equations hold: 
\begin{align*}
\pt g_0&=P_{g_0}^\Hol\big(e^{-2u}\,\tf T(\phi,g_0)\big),\\
\pt u&=\tfrac14 \tr_{g}(T(\phi,g))+\half\varrho-du(X),\\
\pt \phi&=\tau_{g}(\phi)-d\phi(X),
\end{align*}
where $\varrho$ and $X$ are the unique solutions of \eqref{eq:rho} and \eqref{eq:X}, respectively.
\end{prop}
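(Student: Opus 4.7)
The map equation $\pt\phi=\tau_g(\phi)-d\phi(X)$ is literally the second line of \eqref{eq.flow}, so nothing is to be done there. The substance of the proposition is the two metric identities together with the elliptic characterizations of $\varrho$ and $X$. My plan is to tighten the calculation already hinted at in the paragraphs preceding the proposition: derive one master splitting identity for $e^{-2u}\tf T(\phi,g)$, and then read off each claim by projecting it onto an appropriate summand of the $L^2(g_0)$-orthogonal decomposition \eqref{2.mainsplit}.

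The first step is purely algebraic. Starting from $g=e^{2u}g_0$, differentiating gives $\pt g=2(\pt u)\,g+e^{2u}\,\pt g_0$; combining this with $\pt g=T(\phi,g)-L_X g$ and expanding $L_X g=2du(X)\cdot g+e^{2u}L_X g_0$, then multiplying by $e^{-2u}$ and separating $T(\phi,g)=\tf T(\phi,g)+\tfrac12 \tr_g(T(\phi,g))\cdot g$, produces precisely \eqref{2.splittingtfT}, i.e.
\begin{equation*}
e^{-2u}\tf T(\phi,g)=\varrho\cdot g_0+L_X g_0+\pt g_0,
\end{equation*}
with $\varrho$ given by \eqref{eq.rho}. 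The only thing to watch here is that the $g_0$-trace of the left-hand side still vanishes, which follows because $\tf T(\phi,g)$ is $g$-trace-free and $g$ and $g_0$ are conformal.

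Having this master identity in hand, the first and second evolution equations follow from \eqref{2.mainsplit}. Because $g_0$ is a horizontal curve, $\pt g_0\in\Hol(g_0)$, whereas $\varrho g_0\in\Conf(g_0)$ and $L_X g_0\in\im\delta_{g_0}^*$; since $\Hol(g_0)$ is $L^2(g_0)$-orthogonal to $\Conf(g_0)\oplus\im\delta_{g_0}^*$, applying $P_{g_0}^\Hol$ to the master identity extracts exactly $\pt g_0=P_{g_0}^\Hol(e^{-2u}\tf T(\phi,g_0))$. Solving the definition \eqref{eq.rho} of $\varrho$ for $\pt u$ then gives the second equation.

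The elliptic identities for $\varrho$ and $X$ are obtained by applying $DR(g_0)$ and $\delta_{g_0}$ respectively to the master identity. Since $\pt g_0,\,L_X g_0\in T_{g_0}\M=\ker DR(g_0)$ and $e^{-2u}\tf T(\phi,g)$ is $g_0$-trace-free, the formula for $DR(g_0)$ quoted in the excerpt reduces the equation to $-\Delta_{g_0}\varrho-2\bar{K}\varrho=\delta_{g_0}\delta_{g_0}(e^{-2u}\tf T(\phi,g))$, while the zero-mean condition on $\varrho$ comes from the volume-preserving constraint. For $X$: horizontal tensors are divergence-free, so applying $\delta_{g_0}$ kills $\pt g_0$ and, recalling $\delta_{g_0}\delta_{g_0}^* X=-\delta_{g_0}L_X g_0$, gives \eqref{eq:X}. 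Uniqueness follows from invertibility of $-\Delta_{g_0}-2\bar{K}$ on mean-zero functions (automatic for $\bar{K}=-1$; via the constraint for $\bar{K}=0$) together with the absence of Killing fields when $\gamma\geq 2$, or the normalization \eqref{2.normalisationX} when $\gamma=1$. The only genuine pitfall is the trace-bookkeeping between $g$ and $g_0$: writing traces with respect to the wrong metric would introduce spurious conformal factors. Beyond that, the argument is a reorganization of identities already assembled in Section \ref{sec.split}.
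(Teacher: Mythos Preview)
Your proposal is correct and follows essentially the same route as the paper: the argument in Section~\ref{sec.split} leading up to the proposition derives exactly the master identity \eqref{2.splittingtfT} you describe, then obtains $\pt g_0$ by $L^2(g_0)$-orthogonal projection onto $\Hol(g_0)$, reads off the $\pt u$ equation from \eqref{eq.rho}, and characterises $\varrho$ and $X$ by applying $DR(g_0)$ and $\delta_{g_0}$ respectively to the remaining pieces. Your organization (framing everything as ``project onto the appropriate summand of \eqref{2.mainsplit}'') is a clean way to package the same computation.
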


In the case of Harmonic Ricci Flow, where $T(\phi,g)$ is given by \eqref{eq.Trewritten}, and hence, as mentioned before,  $\tf T(\phi,g)=2\alpha(d\phi\tensor d\phi-e(\phi,g_0)g_0)$ as well as
\begin{align*}
\tfrac14 \tr_{g}(T(\phi,g)) &= \bar{K}-K_g+\alpha\big(e( \phi,g)-\bar E(t)\big)\\
&= e^{-2u}\Delta_{g_0}u+\bar{K}\big(1-e^{-2u}\big)+\alpha \big(e(\phi,g_0)e^{-2u}-\bar E(t)\big),
\end{align*}
where we used the Gauss equation \eqref{Gausseq}, we find the following evolution equations.

\begin{cor}
Let $(g(t)=e^{2u(t)}g_0(t), \phi(t))$ be a solution of \eqref{eq.flow} on a surface of genus $\gamma\geq 1$, where $T(\phi,g)$ is given by \eqref{eq.Trewritten}. Then, the following evolution equations hold: 
\begin{subequations}  \label{eq:flow-components}
\begin{align}
\pt g_0 &=P_{g_0}^\Hol\big(2\alpha \; e^{-2u}(d\phi\tensor d\phi-e(\phi,g_0)g_0)\big), \label{eq:g0}\\
\pt u  &=e^{-2u}\Delta_{g_0}u+\bar{K}\big(1-e^{-2u}\big)+\alpha \big(e(\phi,g_0)e^{-2u}-\bar E(t)\big)+\half \varrho-du(X), \label{eq:u}\\
\pt \phi &=e^{-2u}\big(\Delta_{g_0}\phi+A_{g_0}(\phi)(d\phi,d\phi)\big)-d\phi(X) \label{eq:phi}
\end{align}
\end{subequations}
where $\varrho$ and $X$ are the unique solutions of \eqref{eq:rho} and \eqref{eq:X}, respectively, and $A_{g_0}(\phi)$ denotes the second fundamental form of $(N,g_N) \hookrightarrow \R^n$ at $\phi$, so that $\tau_{g_0}(\phi) = \Delta_{g_0}\phi+A_{g_0}(\phi)(d\phi,d\phi)$.
\end{cor}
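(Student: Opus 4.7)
The statement is a direct specialisation of the preceding proposition to the tensor $T(\phi,g)$ defined in \eqref{eq.Trewritten}; this $T$ is manifestly diffeomorphism invariant, so the hypothesis of the proposition is met. My plan is therefore to substitute the explicit form of $T$ into each of the three abstract evolution equations, exploit the conformal relation $g=e^{2u}g_0$ to rewrite every $g$-quantity in terms of $g_0$ and $u$, and invoke the Gauss equation \eqref{Gausseq} to eliminate $K_g$ from the conformal factor equation.

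For \eqref{eq:g0}, I first extract the trace-free part of $T$. Since $\tr_g g=2$ and $\tr_g(d\phi\tensor d\phi)=\abs{d\phi}_g^2=2e(\phi,g)$, a direct computation gives $\tf T(\phi,g)=2\alpha(d\phi\tensor d\phi-e(\phi,g)g)$. The key observation is that the two conformal factors cancel in the product $e(\phi,g)g=(e^{-2u}e(\phi,g_0))(e^{2u}g_0)=e(\phi,g_0)g_0$, so that $\tf T(\phi,g)=\tf T(\phi,g_0)=2\alpha(d\phi\tensor d\phi-e(\phi,g_0)g_0)$; this is also consistent with the general principle that trace-freeness of a symmetric two-tensor is a conformal invariant. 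Substituting into the proposition's formula $\pt g_0=P_{g_0}^\Hol(e^{-2u}\tf T(\phi,g_0))$ produces \eqref{eq:g0}.

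For \eqref{eq:u}, the trace computes to $\tfrac14\tr_g T(\phi,g)=\bar K-K_g-\alpha\bar E(t)+\alpha e(\phi,g)$. Rewriting $e(\phi,g)=e^{-2u}e(\phi,g_0)$ and substituting the Gauss equation in the form $-K_g=e^{-2u}\Deltao u-\bar K e^{-2u}$, I obtain $\tfrac14\tr_g T=e^{-2u}\Deltao u+\bar K(1-e^{-2u})+\alpha(e(\phi,g_0)e^{-2u}-\bar E(t))$. Adding the remaining $\tfrac12\varrho-du(X)$ contribution prescribed by the proposition yields \eqref{eq:u}.

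For \eqref{eq:phi}, I invoke the conformal covariance of the tension field specific to two-dimensional domains: in dimension two $\Delta_g=e^{-2u}\Deltao$, and correspondingly $\tau_g(\phi)=e^{-2u}\tau_{g_0}(\phi)$. Expanding the latter in the ambient Euclidean picture $N\hookrightarrow\R^n$ as $\tau_{g_0}(\phi)=\Deltao\phi+A_{g_0}(\phi)(d\phi,d\phi)$ and subtracting $d\phi(X)$ as in the proposition gives \eqref{eq:phi}. The corollary is thus essentially a bookkeeping exercise, and I do not anticipate any substantive obstacle beyond carrying out these three routine identities; the only point requiring a moment's thought is the cancellation of the conformal factors in the trace-free part, which ensures that the right-hand side of \eqref{eq:g0} depends only on $u$ and on geometric quantities computed with respect to $g_0$.
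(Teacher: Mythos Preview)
Your proposal is correct and follows essentially the same approach as the paper: the paper also derives the corollary directly from the preceding proposition by computing $\tf T(\phi,g)=2\alpha(d\phi\tensor d\phi-e(\phi,g_0)g_0)$ and $\tfrac14\tr_g T(\phi,g)$ via the Gauss equation, exactly as you do. Your treatment is in fact slightly more explicit about the conformal cancellation in $e(\phi,g)g$ and the two-dimensional conformal covariance $\tau_g(\phi)=e^{-2u}\tau_{g_0}(\phi)$, but these are the same identities the paper uses.
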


\subsection{Estimates for $\varrho$ and $X$}
From elliptic PDE theory, we obtain the following a priori estimates for $\varrho$ and $X$.

\begin{lemma}\label{lemma:rho-X}
Let $(g,\phi)$ be as in Proposition \ref{prop:main-result}. Then 
the solution $\varrho$ of \eqref{eq:rho} satisfies
\begin{equation} \label{est:rho-L2}
\sup_{t\in[0,T)}\int_M \Abs{\varrho}^p\dmo<\infty, \quad \text{ for every } 1< p<\infty,
\end{equation}
as well as 
\begin{equation} \label{est:rho-H1}
\int_M\Abs{d\varrho}_{g_0}^2\dmo\leq C\int_M\Abso{du}^2 \dmo+C\int_M\Abs{\tau_g(\phi)}^2d\mu_g+C.
\end{equation}
Similarly, the solution $X$ of \eqref{eq:X} satisfies 
\begin{equation}
\sup_{t\in[0,T)} \int_M \Big( \Abso{\nao X}^p+\Abso{X}^p\Big) \dmo<\infty, \quad \text{ for every } 1< p<\infty,
\end{equation}
and thus in particular $\sup_{t\in[0,T]} \norm{X}_{L^\infty}<\infty$ as well as 
\begin{equation}
\int_M \Abso{\nao^2 X} \dmo \leq C\int_M\Abso{du}^2 \dmo+C\int_M\Abs{\tau_g(\phi)}^2d\mu_g+C.
\end{equation}
Here and in the following, unless mentioned otherwise, $C$ always denotes a generic constant possibly changing from line to line, which is in particular allowed to depend on the coupling function $\alpha$, the time $T$, the initial data $(M,g(0))$ and $\phi(0)$ (and hence in particular the genus $\gamma$ of $M$), as well as the energy density bound in \eqref{ass:energy-density}.
\end{lemma}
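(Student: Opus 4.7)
The plan hinges on two preliminary observations. First, the energy-density bound \eqref{ass:energy-density} makes the tensor $h:=e^{-2u}\,\tf T(\phi,g)=2\alpha e^{-2u}(d\phi\tensor d\phi-e(\phi,g_0)g_0)$ uniformly bounded in $L^\infty(M,g_0)$, because $e^{-2u}|d\phi|_{g_0}^2=|d\phi|_g^2$ is controlled by hypothesis. Second, by the estimates of Section \ref{sec.horizontal} the horizontal curve $g_0(t)$ remains inside a compact subset of moduli space, so the elliptic operators $L:=-\Deltao-2\bar K$ and $\delta_{g_0}\delta_{g_0}^{\ast}$ come with uniform-in-$t$ coercivity, kernel structure, and Sobolev/elliptic constants. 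From here, I would derive \eqref{est:rho-L2} purely by elliptic means: $L$ is coercive on the relevant subspace (trivial when $\bar K=-1$; for the torus on mean-zero functions, via Poincaré combined with the constraint $\int\varrho\,\dmo=0$ already built into \eqref{eq:rho}), so $L^{-1}\!\circ\!\delta_{g_0}\delta_{g_0}$ is a zero-order pseudodifferential operator, bounded on $L^p$ for every $1<p<\infty$, giving $\|\varrho\|_{L^p}\leq C\|h\|_{L^p}\leq C\|h\|_{L^\infty}$.

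For \eqref{est:rho-H1} I would pair \eqref{eq:rho} with $\varrho$ and integrate by parts once on the right to get
\begin{equation*}
\int_M\Abs{d\varrho}_{g_0}^2\dmo-2\bar K\int_M\varrho^2\dmo=\int_M\Scal{d\varrho,\delta_{g_0}h}_{g_0}\dmo,
\end{equation*}
so coercivity plus Young reduce matters to estimating $\|\delta_{g_0}h\|_{L^2(g_0)}$. The crucial computation is the Bianchi-type identity
\begin{equation*}
\delta_{g_0}\big(d\phi\tensor d\phi-e(\phi,g_0)g_0\big)=-\Deltao\phi\cdot d\phi,
\end{equation*}
which follows from the symmetry of $\nao d\phi$ (the cross terms produced by the product rule cancel). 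Applying the product rule to $h=2\alpha e^{-2u}(\cdots)$ then yields $|\delta_{g_0}h|_{g_0}\leq C|du|_{g_0}\,e^{-2u}|d\phi|_{g_0}^2+Ce^{-2u}|\Deltao\phi|\,|d\phi|_{g_0}$. Substituting $e^{-2u}|d\phi|_{g_0}^2=|d\phi|_g^2\leq C$, the extrinsic formula $e^{-2u}\Deltao\phi=\tau_g(\phi)-A(\phi)(d\phi,d\phi)$ with $|A(\phi)(d\phi,d\phi)|\leq C$, and the conformal identity $|d\phi|_{g_0}^2\dmo=|d\phi|_g^2\dm$, squaring and integrating delivers the desired control of $\|\delta_{g_0}h\|_{L^2(g_0)}^2$ by $C\int_M|du|_{g_0}^2\dmo+C\int_M|\tau_g(\phi)|^2\dm+C$.

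For $X$, I would rewrite \eqref{eq:X} as $\delta_{g_0}\delta_{g_0}^{\ast}X=F$ with $F:=-\delta_{g_0}h-d\varrho$, using $\delta_{g_0}(\varrho g_0)=-d\varrho$. The operator $\delta_{g_0}\delta_{g_0}^{\ast}$ is second-order elliptic, injective for $\gamma\geq 2$, and on the torus its two-dimensional kernel of parallel vector fields is killed by the normalisation \eqref{2.normalisationX}. Since $F$ is a divergence of an $L^p$ tensor (both $\varrho g_0$ and $h$ being $L^p$), it sits in $W^{-1,p}$ with $\|F\|_{W^{-1,p}}\leq C(\|h\|_{L^p}+\|\varrho\|_{L^p})$, and $L^p$ elliptic regularity yields $\|X\|_{W^{1,p}}\leq C(\|h\|_{L^p}+\|\varrho\|_{L^p})<\infty$ for every $1<p<\infty$, uniformly in $t$. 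Sobolev embedding $W^{1,p}\hookrightarrow L^\infty$ on a two-dimensional manifold for $p>2$ then produces $\sup_t\|X\|_{L^\infty}<\infty$. Finally, applying $L^2$-elliptic regularity directly gives $\|\nao^2 X\|_{L^2}^2\leq C\|F\|_{L^2}^2\leq C\int|du|_{g_0}^2\dmo+C\int|\tau_g(\phi)|^2\dm+C$, and then $\int_M|\nao^2 X|_{g_0}\dmo\leq\Vol(M,g_0)^{1/2}\|\nao^2 X\|_{L^2}$ combined with the elementary $\sqrt{A}\leq A+1$ produces the stated $L^1$-bound.

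The main obstacle is the Bianchi-type identity and its subsequent conversion into quantities on $(M,g)$: it is precisely this step that replaces raw second derivatives of $\phi$ measured in $g_0$ by the intrinsic tension field $\tau_g(\phi)$, which is what the energy decay \eqref{energy-decay} actually controls. Everything else is bookkeeping of conformal factors, and crucially no upper bound on $u$ itself enters — only its Dirichlet integral and the $L^2$-norm of $\tau_g(\phi)$ appear on the right-hand sides, as needed for the inductive structure of the remainder of the argument.
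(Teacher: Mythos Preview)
Your proposal is correct and follows essentially the same route as the paper: uniform $L^\infty$ control of $h=e^{-2u}\tf T(\phi,g)$ from the energy-density bound, uniform elliptic constants coming from the injectivity-radius control on $g_0$, and then the key divergence computation $\deltao(d\phi\tensor d\phi-e(\phi,g_0)g_0)=\pm\Deltao\phi\cdot d\phi$ to bound $\|\deltao h\|_{L^2(g_0)}$ in terms of $\|du\|_{L^2(g_0)}$ and $\|\tau_g(\phi)\|_{L^2(g)}$. The only cosmetic difference is that the paper uses the orthogonality $A(\phi)(d\phi,d\phi)\cdot d\phi=0$ to pass directly from $\Deltao\phi\cdot d\phi$ to $\tau_{g_0}(\phi)\cdot d\phi$, whereas you split off the second-fundamental-form term and bound it separately; both yield the same estimate.
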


\begin{proof}
Let us first remark that we have good control on the operators, thanks to the uniform estimates for $g_0$ (and in particular the uniform bound on $\inj(M,g_0)$) that we will derive in the next section (see in particular Remark \ref{rem2CorB} and Lemma \ref{lemma:inj-rad-g0}). Hence, the elliptic estimates used below all apply with constants $C$ which are independent of $t$.\\

Next, observe that due to the assumption on the energy density, the right hand sides of \eqref{eq:rho} and \eqref{eq:X} are bounded uniformly in time in the spaces $W^{-2,p}(M)$ and $W^{-1,p}(M)$, respectively. The first claim is thus essentially just $L^p$ theory applied to uniformly elliptic operators with trivial kernel.\\
 
For the $H^2$-estimate for $X$, respectively the $H^1$-estimate for $\varrho$, we observe that the divergence of the Hopf-differential is given as $\deltao(\tf T(\phi,g_0))=2\alpha\Deltao\phi\cdot d\phi=2\alpha\tau_{g_0}(\phi)\cdot d\phi$, so that the uniform bound on
\begin{equation*}
\Abs{d\phi}_g^2 = e^{-2u}\Abso{d\phi} \leq C < \infty
\end{equation*}
implies
\begin{equation}\label{est:divT}
\begin{aligned}
\Abso{\deltao(e^{-2u}\,\tf T(\phi,g_0))}^2 &\leq C\Abso{du}^2+C\Abs{\tau_{g_0}(\phi)}^2\cdot\Abso{d\phi}^2e^{-4u}\\
&\leq  C\Abso{du}^2+Ce^{2u}\Abs{\tau_{g}(\phi)}^2
\end{aligned}
\end{equation}
and thus
\begin{equation*}
\Norm{\deltao(e^{-2u}\,\tf T(\phi,g_0))}_{L^2(M,g_0)}^2\leq C\int_M\Abso{du}^2\dmo +C\Norm{\tau_g(\phi)}_{L^2(M,g)}^2.
\end{equation*}
The corresponding $L^2$-bound on the right hand side of \eqref{eq:X} implies the $H^2$-estimate for $X$, while the resulting $W^{-1,2}$-bound on the right hand side of
\eqref{eq:rho} leads to the claimed $H^1$-bound on $\varrho$.
\end{proof}

\section{Evolution in horizontal direction}\label{sec.horizontal}
As we shall see, for solutions of \eqref{eq.flow} satisfying \eqref{ass:energy-density} the evolution in horizontal direction can be very well controlled using the theory of general horizontal curves developed in the joint work \cite{RT, R-existence, RT-neg, RT-horizontal} of P. Topping and the second author.  As we shall prove later on in this section, in this setting the injectivity radius can be bounded away from zero uniformly on time intervals of bounded length. This allows us to control the metric component using results obtained in \cite{RT-horizontal} and \cite{RT-neg}, see also \cite{R-existence} for related estimates. We begin by recalling the relevant results from \cite{RT-horizontal} and \cite{RT-neg}, or rather their corollaries for the simpler setting of horizontal curves with controlled injectivity radius, to make the presentation self-contained.\\

To begin with, we recall that the fact that for holomorphic functions all $C^k$-norms are equivalent to the $L^1$-norm has an analogue for the spaces of holomorphic quadratic differentials $\newhol(M,g_0)$, $g_0\in\M$. To be more precise, in the case of bounded injectivity radius, \cite[Lemma 2.6]{RT-global} implies in particular the following.

\begin{lemma}\label{lemmaCkL1}
For every $\eps_0>0$ and $k\in \N_0$ there exists a constant $C<\infty$ such that for every $g_0\in\M$ with $\inj(M,g_0)\geq \eps_0$
\begin{equation*}
\Norm{\Real(\Omega)}_{C^k(M,g_0)} \leq C\Norm{\Omega}_{L^1(M,g_0)}, \qquad \text{ for all }\, \Omega\in\newhol(M,g_0).
\end{equation*}
\end{lemma}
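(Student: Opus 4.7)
The plan is to reduce the claimed inequality to a local estimate for holomorphic functions, where the uniform injectivity radius bound supplies isothermal charts of uniformly controlled size on which all geometric quantities are controlled by $\eps_0$ alone. For each fixed $g_0$, the space $\newhol(M,g_0)$ is finite-dimensional (of complex dimension $3\gamma-3$ for $\gamma\geq 2$ and $1$ for $\gamma=1$), so any two norms on it are trivially equivalent; the real content of the lemma is that the constant can be chosen uniformly in $g_0$ subject to $\inj(M,g_0)\geq\eps_0$.

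First I would fix an arbitrary $p\in M$ and work in isothermal coordinates $z=x+iy$ on the geodesic ball $B_{\eps_0/2}^{g_0}(p)$, writing $g_0=\lambda^2\abs{dz}^2$. Since $g_0$ has constant Gauss curvature $\bar K\in\{-1,0\}$, on such a ball it is locally isometric to one of two fixed model spaces, so $\lambda$ and all of its Euclidean coordinate derivatives are bounded by constants depending only on $\eps_0$ and $\bar K$. Any $\Omega\in\newhol(M,g_0)$ is locally of the form $\Omega=f\,dz^2$ with $f$ holomorphic, and the standard identities
\begin{equation*}
\Abso{\Omega}\sim \lambda^{-2}\Abs{f},\qquad \dmo=\lambda^2\,dx\,dy,
\end{equation*}
convert the hypothesis $\Norm{\Omega}_{L^1(M,g_0)}\leq 1$ into a uniform upper bound on $\int\abs{f}\,dx\,dy$ over the Euclidean coordinate disc. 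Since $\abs{f}$ is subharmonic, the mean value inequality yields $\abs{f(p)}\leq C(\eps_0)\Norm{\Omega}_{L^1(M,g_0)}$, and iterating the Cauchy integral formula on concentric subdiscs furnishes the same type of bound for every Euclidean derivative of $f$. Converting these Euclidean estimates back into $g_0$-covariant derivatives of $\Real(\Omega)$ introduces only the bounded derivatives of $\lambda$, and a finite cover of the compact manifold $M$ then completes the argument.

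The main obstacle is essentially bookkeeping: verifying that the size of the isothermal chart, the bounds on $\lambda$ and its derivatives, and the passage between Euclidean coordinate derivatives and $g_0$-covariant derivatives all depend only on $\eps_0$, $\bar K$ and $k$, with no hidden dependence on $g_0$. This is forced by the combination of constant curvature and $\inj(M,g_0)\geq\eps_0$, which together amount to uniformly bounded geometry on charts of radius $\eps_0/2$. If one prefers to bypass explicit local computation, an alternative route is a compactness argument: a hypothetical sequence $(g_n,\Omega_n)$ with $\inj(M,g_n)\geq\eps_0$, $\Norm{\Omega_n}_{L^1(M,g_n)}=1$, and $\Norm{\Real(\Omega_n)}_{C^k(M,g_n)}\to\infty$ would, by Mumford's compactness theorem (or its analogue for flat tori of fixed area), converge subsequentially after pull-back by diffeomorphisms to a smooth limit $(g_\infty,\Omega_\infty)$ with $\Omega_\infty\in\newhol(M,g_\infty)$ non-trivial, contradicting equivalence of norms on the finite-dimensional space $\newhol(M,g_\infty)$.
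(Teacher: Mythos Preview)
Your proposal is correct. The paper itself does not prove this lemma at all: it simply states that the result follows from \cite[Lemma~2.6]{RT-global} in the regime of bounded injectivity radius. So there is no ``paper's own proof'' to compare against; you have supplied what the paper outsources.

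Both of your routes are standard and sound. The direct approach via isothermal charts is exactly the right mechanism: constant curvature plus the injectivity radius bound gives uniformly controlled charts isometric to a fixed model, the identity $\abs{\Omega}_{g_0}\,d\mu_{g_0}\sim\abs{f}\,dx\,dy$ turns the $L^1$ bound into a Euclidean one, and subharmonicity of $\abs{f}$ together with Cauchy estimates does the rest. The only point to keep tidy is that the Euclidean radius of the isothermal disc corresponding to a geodesic ball of radius $\eps_0/2$ is itself controlled by $\eps_0$ and $\bar K$, but this is explicit for the two model geometries. Your alternative compactness argument via Mumford (for $\gamma\geq 2$) and the analogous statement for unit-area flat tori is also fine; to make it airtight you would note that the $L^1$-normalised $\Omega_n$, being locally holomorphic with uniform $L^1$ bounds in charts, converge locally uniformly after passing to a subsequence, so the limit is a nonzero element of $\newhol(M,g_\infty)$ and the contradiction follows from equivalence of norms on that finite-dimensional space.
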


Here and in the following the $C^k$-norm of a tensor $h\in \Sym^2(T^*M)$, 
\begin{equation*}
\Norm{h}_{C^k(M,g_0)}=\sum_{\ell=0}^k \sup_M \Abs{\nabla_{g_0}^\ell h}_{g_0},
\end{equation*}
is computed in terms of the Levi-Civit\`a connection on $(M,g_0)$ and its extensions to the corresponding bundles.\\

A useful consequence of this result is the following corollary.
\begin{cor}\label{corA}
Under the same assumptions as in Lemma \ref{lemmaCkL1}, we have for all quadratic differentials $\Psi$ on $(M,g_0)$
\begin{equation*}
\Norm{P_{g_0}^{\Hol}( \Real(\Psi))}_{C^k(M,g_0)} \leq C\Norm{\Psi}_{L^1(M,g_0)}.
\end{equation*}
\end{cor}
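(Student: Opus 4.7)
The plan is to reduce Corollary \ref{corA} to Lemma \ref{lemmaCkL1} by first writing the projected object as the real part of a holomorphic quadratic differential. Since $P_{g_0}^{\Hol}$ takes values in $\Hol(g_0)$, we have $P_{g_0}^{\Hol}(\Real(\Psi))=\Real(\Omega)$ for some uniquely determined $\Omega\in\newhol(M,g_0)$, and in view of Lemma \ref{lemmaCkL1} it then suffices to bound $\norm{\Omega}_{L^1(M,g_0)}$ by a uniform constant times $\norm{\Psi}_{L^1(M,g_0)}$.

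To this end, I would fix a $\C$-basis $\Omega_1,\ldots,\Omega_d$ of $\newhol(M,g_0)$, with $d$ depending only on the genus $\gamma$, chosen so that the associated real quadratic differentials $\Real(\Omega_j)$ and $\Real(i\Omega_j)$, $j=1,\ldots,d$, together form an $L^2(M,g_0)$-orthonormal basis of $\Hol(g_0)$. Since $\Imag(\Omega_j)=-\Real(i\Omega_j)$, this gives $\norm{\Omega_j}_{L^2(M,g_0)}^2=\norm{\Real(\Omega_j)}_{L^2}^2+\norm{\Imag(\Omega_j)}_{L^2}^2=2$, and hence by Cauchy--Schwarz and the fixed volume \eqref{eq.initialvolume} also $\norm{\Omega_j}_{L^1(M,g_0)}\leq C$ uniformly. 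Applying Lemma \ref{lemmaCkL1} to each $\Omega_j$ and to each $i\Omega_j$ then yields the uniform bound
\[
\norm{\Real(\Omega_j)}_{C^k(M,g_0)}+\norm{\Real(i\Omega_j)}_{C^k(M,g_0)}\leq C
\]
for a constant depending only on $\eps_0$, $k$ and $\gamma$.

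The conclusion follows by expanding the $L^2(M,g_0)$-orthogonal projection in the chosen basis,
\[
\Real(\Omega)=\sum_{j=1}^d \Big(\scal{\Real(\Psi),\Real(\Omega_j)}_{L^2(M,g_0)}\Real(\Omega_j)+\scal{\Real(\Psi),\Real(i\Omega_j)}_{L^2(M,g_0)}\Real(i\Omega_j)\Big),
\]
and estimating each coefficient by $L^1$--$L^\infty$ duality, using the uniform $L^\infty$-bound on the basis elements just obtained: the resulting $2d$ terms are each bounded by $C\norm{\Psi}_{L^1(M,g_0)}$, and combined with the $C^k$-bound on the basis elements we obtain the claim. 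The only mild subtlety to watch out for is that normalising $\Real(\Omega_j)$ alone in $L^2$ does not a priori control $\Omega_j$ itself in $L^1$, since a holomorphic differential also carries information in its imaginary part; adapting the basis to the complex structure so that $\Omega_j$ and $i\Omega_j$ enter symmetrically removes this difficulty and is the only non-routine step in the argument.
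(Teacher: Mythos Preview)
Your proof is correct and follows essentially the same approach as the paper: expand the projection in a finite basis of the space of holomorphic quadratic differentials, control the $C^k$-norms of the basis elements uniformly via Lemma \ref{lemmaCkL1} (using that their $L^2$-, hence $L^1$-, norms are bounded), and bound the expansion coefficients by $L^1$--$L^\infty$ duality. The paper streamlines the argument slightly by using a complex $L^2$-orthonormal basis of $\newhol(M,g_0)$ together with the identity $P_{g_0}^{\Hol}(\Real(\Psi))=\Real\bigl(P_{g_0}^\newhol(\Psi)\bigr)$, which avoids having to arrange the real basis in the paired form $\Real(\Omega_j),\,\Real(i\Omega_j)$; but this is only a cosmetic difference.
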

This corollary follows from Lemma \ref{lemmaCkL1} either by a direct argument, which is given below, or (for $\gamma\geq 2$) by combining the lemma with the following more general fact.
\begin{remark}
It is proven in \cite[Proposition 4.10]{RT-neg} that there exists a constant $C$ depending only on the genus $\gamma\geq 2$ of $M$ so that
$\norm{P_{g_0}^{\Hol}(\Psi)}_{L^1} \leq C \norm{\Psi}_{L^1}$ holds true for all $L^2$ quadratic differentials on $M$ and every $g_0\in \mathcal{M}_{-1}$.
\end{remark}

\begin{proof}[Proof of Corollary \ref{corA}]
Let $g_0$ be as in Lemma \ref{lemmaCkL1} and select an orthonormal basis $(\Om_j)$ of the finite dimensional space $\newhol(M,g_0)$ of holomorphic quadratic differentials. Then
\begin{equation*}
P_{g_0}^{\Hol}( \Real(\Psi))=\Real(P_{g_0}^\newhol(\Psi))=\Real\Big(\sum_j \Scal{\Psi,\Om_j}_{L^2(M,g_0)} \Om_j\Big)
\end{equation*}
so, by Lemma \ref{lemmaCkL1} 
\begin{align*}
\Norm{P_{g_0}^{\Hol}( \Real(\Psi))}_{C^k}
&\leq  \Norm{\Psi}_{L^1}\sum_j\Norm{\Om_j}_{L^\infty}\cdot \Big(\Norm{\Real(\Om_j)}_{C^k}+\Norm{\Real(i\, \Om_j)}_{C^k}\Big)
\\&\leq C\,\norm{\Psi}_{L^1},
\end{align*}
where all norms are computed with respect to $g_0$ and where we use that $\Vol(M,g_0)\leq C(\gamma)$ and thus $\norm{\Om_j}_{L^1}\leq C\,\norm{\Om_j}_{L^2}=C$.
\end{proof}

In order to apply these results to the analysis of solutions of \eqref{eq.flow}, we note the following.
\begin{remark}\label{rem:Linfty-T}
Let $(g=e^{2u}g_0,\phi)$ be a solution of \eqref{eq.flow} whose energy density is bounded,
\begin{equation}\label{ass:energy-density-with-K} 
\sup_{x\in M, t\in [0,T)} \frac12 \Abs{d\phi(x,t)}_{g}^2\leq K
\end{equation}
for some $K\in\R$. Then 
\begin{equation}\label{est:uniform-Linfty-T}
\Norm{e^{-2u}\,\tf T(\phi,g_0)}_{L^\infty(M,g_0)}=
\Norm{\tf T(\phi,g)}_{L^\infty(M,g)}\leq C \sup_{M\times [0,T)} \Abs{d\phi(x,t)}_{g}^2 \leq C\cdot K,
\end{equation}
for a universal constant $C$.
\end{remark}

Combined with Lemma \ref{lemmaCkL1} and Corollary \ref{corA}, we thus obtain that $\pt g(t)$ is bounded uniformly with respect to the $C^k$ norm on $(M,g(t))$. In the following analysis of the conformal factor and the map component it will be important that $\pt g(t)$ is not just controlled with respect to this particular norm but that we can control the velocities at different times all with respect to the same norm, cf. also \cite{RT-global} and \cite{R-existence} for related problems. To this end we recall from \cite[Lemma 3.3] {RT-horizontal}
that the estimate

\begin{equation}\label{est:Ck-equiv-horiz}
\norm{\Om}_{C^k(M,g(t_1))}\leq \norm{\Om}_{C^k(M,g(t_2))}\cdot \exp{\left(\int_{t_1}^{t_2}\norm{\pt g(t)}_{C^k(M,g(t))} dt\right)}
\end{equation}
holds true for any fixed tensor $\Om$, any smooth curve of metrics $(g(t))_{t\in[0,T)}$ on $M$ and all $0\leq t_1<t_2<T$ with a constant $C$ that depends only on $k$ and the order of the tensor. Applied to our horizontal curve of metrics $g_0(\cdot)$ we obtain the following corollary.

\begin{cor}\label{corB}
Let $(g=e^{2u}g_0,\phi)$ be a solution of \eqref{eq.flow} on an interval $[0,T)$, $T<\infty$, and assume that 
\eqref{ass:energy-density-with-K} holds true for some $K\in \R$ as well as that 
$\inj(M,g_0(t))\geq \eps_0>0$, $t\in[0,T)$. Then the $C^k(M,g_0(t))$-norms, $t\in[0,T)$, are uniformly equivalent in the sense that for every $k_0\in \N_0$ there exists a constant 
$C=C(k,\eps_0,T,K,\gamma)$ so that 
\begin{equation}\label{est:equiv-norms}
\Norm{h}_{C^k(M,g_0(s))}\leq C\cdot \Norm{h}_{C^k(M,g_0(t))}, \quad \forall s, t\in [0,T),\, \forall h\in \Sym^2(T^*M).
\end{equation}
Furthermore, the velocity of $g_0$ is controlled with respect to every such $C^k$-norm
\begin{equation} \label{est:Ck-g}
\Norm{\pt g_0(t)}_{C^k(M,g_0(s))} \leq C, \quad \forall s, t \in [0,T),
\end{equation}
again with $C=C(k,\eps_0,T,K,\gamma)$.
\end{cor}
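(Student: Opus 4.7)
My plan is to prove both claims in three short steps that rely almost entirely on the tools already assembled in Corollary \ref{corA}, Remark \ref{rem:Linfty-T}, and the general norm-comparison estimate \eqref{est:Ck-equiv-horiz}. First I would obtain a time-independent bound for $\norm{\pt g_0(t)}_{C^k(M,g_0(t))}$; then I would use this to upgrade \eqref{est:Ck-equiv-horiz} to the equivalence \eqref{est:equiv-norms} of the $C^k$-norms at different times; and finally I would combine these two ingredients to deduce the cross-time velocity estimate \eqref{est:Ck-g}.

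For the first step, fix $t\in [0,T)$. Since by assumption $\inj(M,g_0(t))\geq \eps_0$, Corollary \ref{corA} applies at time $t$ and, combined with the evolution equation \eqref{eq:g0}, yields
\begin{equation*}
\norm{\pt g_0(t)}_{C^k(M,g_0(t))}\leq C(k,\eps_0)\,\norm{e^{-2u(t)}\,\tf T(\phi,g_0(t))}_{L^1(M,g_0(t))}.
\end{equation*}
The $L^1$-norm on the right is bounded by $\Vol(M,g_0(t))$ times the $L^\infty$-norm, with the latter bounded by $CK$ via Remark \ref{rem:Linfty-T}. Since the volume of $(M,g_0(t))$ is fixed by the normalisation \eqref{eq.initialvolume} and depends only on $\gamma$, the result is a time-independent bound $\norm{\pt g_0(t)}_{C^k(M,g_0(t))}\leq C(k,\eps_0,K,\gamma)$. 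Inserting this into \eqref{est:Ck-equiv-horiz} applied to the smooth curve $g_0$ gives, for all $s,t\in [0,T)$ and every $h\in \Sym^2(T^*M)$,
\begin{equation*}
\norm{h}_{C^k(M,g_0(s))}\leq \norm{h}_{C^k(M,g_0(t))}\exp\Big(\Big|\int_s^t \norm{\pt g_0(r)}_{C^k(M,g_0(r))}\,dr\Big|\Big)\leq e^{CT}\,\norm{h}_{C^k(M,g_0(t))},
\end{equation*}
which is \eqref{est:equiv-norms}. Finally, applying \eqref{est:equiv-norms} to the tensor $h=\pt g_0(t)$ and combining with the bound from the first step yields \eqref{est:Ck-g}.

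I do not expect any serious obstacle: the substantive estimates are already contained in Lemma \ref{lemmaCkL1}, Corollary \ref{corA} and in the norm-comparison result \eqref{est:Ck-equiv-horiz} borrowed from prior joint work, and the argument above amounts to a Gronwall-style packaging. The one point that needs a little care is the ordering of the steps: the first-step bound on $\norm{\pt g_0(t)}_{C^k(M,g_0(t))}$ is computed entirely at the single time $t$ using only the hypothesis $\inj(M,g_0(t))\geq \eps_0$ at that $t$, so it can legitimately be plugged into the integrand of \eqref{est:Ck-equiv-horiz} before the cross-time equivalence \eqref{est:equiv-norms} has been established.
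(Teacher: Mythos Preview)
Your proposal is correct and follows exactly the approach of the paper: bound $\norm{\pt g_0(t)}_{C^k(M,g_0(t))}$ at each fixed time via Corollary~\ref{corA} and Remark~\ref{rem:Linfty-T}, feed this into \eqref{est:Ck-equiv-horiz} (applied in both time directions) to obtain \eqref{est:equiv-norms}, and then combine the two for \eqref{est:Ck-g}. The paper's proof is essentially identical, including your observation that the single-time bound must be established before invoking the cross-time comparison.
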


\begin{proof}
Let $(g=e^{2u(t)}g_0(t),\phi(t))$ be a solution of \eqref{eq.flow} as in the corollary. Given any $k\in\N_0$ and $t\in[0,T)$, we can use Corollary \ref{corA} to estimate
\begin{equation}\label{est:g-Ck-right-t}
\begin{aligned}
\Norm{\pt g_0(t)}_{C^k(M,g_0(t))}&=\Norm{P_{g_0}^\Hol(e^{-2u}\,\tf T(\phi,g))}_{C^k(M,g_0(t))}\\
&\leq C\cdot \Norm{e^{-2u}\,\tf T(\phi,g)}_{L^1(M,g_0(t))}\\
&\leq C \cdot \Norm{e^{-2u}\,\tf T(\phi,g)}_{L^\infty(M,g_0(t))}\;\leq C,
\end{aligned}
\end{equation}
where we applied Remark \ref{rem:Linfty-T} in the last step and where $C$ depends only on $\eps_0$, $k$, $K$ and as always the genus of $M$.\\

Applying \eqref{est:Ck-equiv-horiz} once for $t\mapsto g(t)$ and once to $t\mapsto g(-t)$, we thus obtain that 
for any fixed tensor $h\in \Sym^2(T^*M)$ 
\begin{equation}
\Norm{h}_{C^k(M,g_0(s))}\leq e^{CT}\Norm{h}_{C^k(M,g_0(t))} \text{ for all } s,t\in[0,T),
\end{equation}
with $C=C(k,\eps_0,\gamma, K)$, which yields \eqref{est:equiv-norms}. The second claim of the corollary is then an immediate consequence of \eqref{est:equiv-norms} and \eqref{est:g-Ck-right-t}.
\end{proof}

\begin{remark}\label{rem2CorB}
Corollary \ref{corB} implies in particular that in this setting the metrics $(g_0(t))_{t\in[0,T)}$ are uniformly equivalent and so are the corresponding Sobolev norms $\norm{\cdot}_{W^{k,p}(M,g_0(t))}$, where $t\in [0,T)$, for any $k \in \mathbb{N}_0$ and $p \in [1,\infty]$. This fact will be used several times in the Sections \ref{sec.struwe} and \ref{sec.curv}.
\end{remark}

The reason why we can apply the above results for solutions of \eqref{eq.flow} satisfying \eqref{ass:energy-density-with-K} is the following control on the injectivity radius which is based on results from \cite{RT-neg} that will be recalled in the proof below.

\begin{lemma}\label{lemma:inj-rad-g0}
Let $(g=e^{2u}g_0,\phi)$ be a solution of \eqref{eq.flow} on an interval $[0,T)$, $T<\infty$, for which 
\eqref{ass:energy-density} is satisfied. Then 
\begin{equation*}
\inf_{t\in [0,T)}\inj(M,g_0(t)) \geq \eps_0>0,
\end{equation*}
where the constant $\eps_0>0$ depends only on $\inj(M,g_0(0))$, $T$, $\gamma$ and an upper bound on the energy density $e(\phi,g)$.
\end{lemma}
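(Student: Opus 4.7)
The plan is to exploit the energy density hypothesis to obtain a uniform $L^1$-bound on the horizontal velocity $\pt g_0$ in the moving metric $g_0(t)$, and then to invoke the collar-degeneration results of \cite{RT-neg} to conclude that the systole of $g_0(t)$ cannot collapse on a finite time interval. Note that we cannot appeal to Corollary \ref{corB} here, because that corollary already assumes a lower bound on $\inj(M,g_0)$; the injectivity radius bound must be proved directly from \eqref{ass:energy-density}.

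First, by \eqref{ass:energy-density} together with Remark \ref{rem:Linfty-T}, we have a uniform bound
\begin{equation*}
\Norm{e^{-2u(t)}\,\tf T(\phi,g_0(t))}_{L^\infty(M,g_0(t))} = \Norm{\tf T(\phi,g(t))}_{L^\infty(M,g(t))} \leq CK.
\end{equation*}
Since $g_0(t)\in \M$, the volume $\Vol(M,g_0(t))$ is a fixed constant depending only on $\gamma$ by \eqref{eq.initialvolume}. Hence the same quantity is uniformly bounded in $L^1(M,g_0(t))$. For $\gamma\geq 2$, the $L^1 \to L^1$ boundedness of the projection $P_{g_0}^\Hol$ stated in the Remark before Corollary \ref{corA} (namely \cite[Proposition 4.10]{RT-neg}) then yields
\begin{equation*}
\Norm{\pt g_0(t)}_{L^1(M,g_0(t))} \leq C\Norm{e^{-2u(t)}\,\tf T(\phi,g_0(t))}_{L^1(M,g_0(t))} \leq C,
\end{equation*}
uniformly in $t\in[0,T)$, with a constant independent of $\inj(M,g_0(t))$. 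In the torus case $\gamma=1$ the space $\Hol(g_0)$ is two-dimensional, so the same $L^1$-bound follows by a direct argument (or simply from an $L^\infty$-bound via finite-dimensionality).

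Second, we feed this uniform $L^1$-bound into the general theory of horizontal curves from \cite{RT-neg}. Along any smooth horizontal curve $g_0:[0,T)\to\M$, one has a quantitative estimate bounding the logarithmic change of the length of the shortest closed geodesic by the integrated $L^1$-velocity of $g_0$; via the collar lemma, this translates into a lower bound on $\inj(M,g_0(t))$ in terms of $\inj(M,g_0(0))$, $\gamma$, and $\int_0^t \norm{\pt g_0(s)}_{L^1(M,g_0(s))}\,ds$. Combined with Step 1 and the assumption $T<\infty$, this gives
\begin{equation*}
\inj(M,g_0(t))\geq \eps_0 = \eps_0\big(\inj(M,g_0(0)),\gamma,T,K\big)>0,
\end{equation*}
as claimed. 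In the torus case the same conclusion is even more transparent: after identifying Teichm\"uller space with the upper half-plane, a horizontal curve with finite $L^2$-length (hence finite $L^1$-length, by the bound on $\Vol(M,g_0)$) stays in a compact subset of moduli space, so the injectivity radius remains bounded below.

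The only genuine difficulty is invoking the correct form of the injectivity-radius-along-horizontal-curves estimate from \cite{RT-neg}; the analytic input from our side is the uniform $L^1$-bound on the horizontal velocity, which is a completely elementary consequence of the energy density assumption together with the $L^1\to L^1$ boundedness of $P_{g_0}^\Hol$. Everything else is then a matter of translating the abstract horizontal-curve statement into the current setting.
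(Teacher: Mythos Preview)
Your treatment of the torus case is fine and coincides with the paper's argument: the $L^2$-orthogonality of the projection gives a uniform $L^2$ bound on $\pt g_0$, hence finite Weil--Petersson length, and completeness of Teichm\"uller space for $\gamma=1$ finishes the job.

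For $\gamma\geq 2$, however, there is a genuine gap. You obtain a uniform bound on $\norm{\pt g_0}_{L^1(M,g_0)}$ and then assert that this controls the \emph{logarithmic} change of the length $\ell$ of the shortest closed geodesic. This is not correct, and no such estimate appears in \cite{RT-neg}. Writing $\pt g_0=\Real(\Omega)$ with $\Omega\in\newhol(M,g_0)$ and using the collar description recalled in the proof, one has $\ddt\ell=-\tfrac{2\pi^2}{\ell}\Real(b_0)$, where $b_0$ is the principal Fourier coefficient on the collar. Averaging over the collar gives only $\abs{b_0}\leq C\ell\,\norm{\Omega}_{L^1}$, hence $\abs{\ddt\ell}\leq C\norm{\pt g_0}_{L^1}$ with no factor of $\ell$. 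Integrating, $\ell(t)\geq\ell(0)-C\int_0^t\norm{\pt g_0}_{L^1}$, which can reach zero in finite time even when the right hand side is uniformly bounded. Equivalently, Teichm\"uller space is Weil--Petersson incomplete for $\gamma\geq 2$, and the $L^1$-length is even weaker than the $L^2$-length, so finite integrated $L^1$-velocity does not keep the curve away from the boundary.

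What the paper does instead is to avoid projecting altogether and work with the pre-projection quantity $\Psi=e^{-2u}\,\tf T(\phi,g)$, for which one has the pointwise bound $\norm{\Psi}_{L^\infty(M,g_0)}\leq CK$ from Remark \ref{rem:Linfty-T}, valid with no assumption on $\inj(M,g_0)$. Plugging this into \cite[Lemma 2.2]{RT-neg} and estimating $\abs{\langle\Psi,dz^2\rangle_{L^2(\Col)}}\leq \norm{\Psi}_{L^\infty}\norm{dz^2}_{L^1(\Col)}\leq C\norm{\Psi}_{L^\infty}/\ell$ yields $-\ddt\ell\leq C\ell\,\norm{\Psi}_{L^\infty}$. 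It is precisely this extra factor of $\ell$ (coming from the $L^\infty$ rather than $L^1$ control) that produces the Gronwall inequality $\ell(t)\geq\ell(0)e^{-Ct}$ and hence the uniform lower bound on the injectivity radius. Your proposed route through $\norm{\pt g_0}_{L^1}$ loses exactly this factor and cannot be repaired without reverting to the $L^\infty$ information on $\Psi$.
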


\begin{proof}
Let us first assume that the genus of $M$ is $\gamma\geq 2$ and consequently that the metrics $g_0$ are hyperbolic.\\

We recall that for any $\delta\in (0,\arsinh(1))$ the $\delta$-thin part of a hyperbolic surface $(M,g_0)$ is given by the union of so-called collar neighbourhoods $\Col(\ell)$ around the simple closed geodesics of $(M,g_0)$ of length $\ell<2\delta$ and that each such collar $\Col(\ell)$ is isometric to $\big((-Y(\ell),Y(\ell))\times S^1, \varrho^2(s)(ds^2+d\theta^2)\big)$, where 
\begin{equation*}
\varrho(s)=\frac{\ell}{2\pi \cos(\frac{\ell s}{2\pi})} \qquad\text{ and }\qquad  Y(\ell)=\frac{2\pi}{\ell}\Big(\frac\pi2-\arctan\Big(\sinh\Big(\frac{\ell}{2}\Big)\Big) \Big).
\end{equation*}
We also recall that along a horizontal curve of metrics, $\pt g_0(t)=\Real(\Om(t))$, with $\Om(t)\in \newhol(M,g_0(t))$, 
the evolution of the length $ \ell(t)$ of the central geodesic of such a collar
is determined solely in terms of the principal part $b_0dz^2$ of the Fourier expansion of 
\begin{equation*}
\Om=\sum_{n\in\Z} b_ne^{ni\theta}e^{ns} dz^2, \quad z=s+i\theta, \, (s,\theta)\in (-Y(\ell),Y(\ell))\times S^1,
\end{equation*}
namely $\ddt \ell=-\frac{2\pi^2}{\ell}\Real(b_0)$, see e.g. \cite[Remark 4.12]{RT-neg}.\\

Furthermore, if $\pt g_0=\Real(P^\newhol_{g_0} ( \Psi(t)))$ is described as projection of a quadratic differential $\Psi(t)$ onto $\newhol(g_0)$ as in our situation, the results obtained in \cite{RT-neg} allow us to compute $\ddt \ell$ in terms of $\Psi(t)$. More precisely,  Lemma 2.2 of \cite{RT-neg} gives that
\begin{equation*}
\Aabs{\frac{d\ell}{dt}+\frac{\ell^2}{16\pi^3}\Real\Scal{\Psi,dz^2}_{L^2(\Col,g_0)}} \leq C\ell^2\Norm{\Psi}_{L^1(M,g_0)}
\end{equation*}
for $\ell\in (0,2\arsinh(1))$.\\

Since $\norm{dz^2}_{L^1(\Col,g_0)}=8\pi Y(\ell)\leq \frac{C}{\ell}$ and since the area of $(M,g_0)$ is determined in terms of the genus, this implies in particular that 
\begin{equation}\label{est:dell-Linfty}
-\frac{d\ell}{dt}\leq C\ell\,\Norm{\Psi}_{L^\infty(M,g_0)},
\end{equation}
for every closed geodesic of length $\ell\in (0,2\arsinh(1))$, in particular for the shortest simple closed geodesic of a hyperbolic surface with injectivity radius $\inj(M,g)=\half \ell\leq \arsinh(1)$.\\

Applied to solutions of \eqref{eq.flow} satisfying \eqref{ass:energy-density}, we obtain, using in particular also Remark \ref{rem:Linfty-T}, that 
the function $t\mapsto \inj(M,g_0(t))$, which is Lipschitz and thus differentiable almost everywhere, satisfies
\begin{equation*}
-\ddt \inj(M,g_0)\leq C\Norm{e^{-2u}\,\tf T(\phi,g)}_{L^\infty(M,g_0)}\cdot \inj(M,g_0) \leq C \cdot \inj(M,g_0)
\end{equation*}
for almost every time $t$ with $\inj(M,g_0(t))\leq \arsinh(1)$. This implies the claim of the lemma for surfaces of genus at least $2$.\\

We finally consider the case that $M$ is a torus. Here it is sufficient to observe that since we project $L^2$-orthogonally, we know that 
\begin{equation*}
\Norm{\pt g_0}_{L^2(M,g_0)}\leq \Norm{e^{-2u}\,\tf{T}(\phi,g)}_{L^2(M,g_0)}\leq C
\end{equation*}
is uniformly bounded. In particular, the curve $(g_0(t))_{t\in [0,T)}$ has finite $L^2$-length and thus its projection to Teichm\"uller space has finite length with respect to 
the Weil-Petersson metric. But the Teichm\"uller space of the torus (equipped with the Weil-Petersson metric) is isometric to the hyperbolic plane and so in particular complete, and thus the injectivity radius must remain bounded away from zero as described in the lemma.
\end{proof}

\section{Evolution of the conformal factor}\label{sec.struwe}
For two-dimensional Ricci Flow -- which evolves only in conformal directions -- regularity of the conformal factor can be proved from a uniform bound on its Liouville energy, see Struwe \cite{Struwe}. Motivated by this, we consider the Liouville energy of the conformal factor $u$ along the Harmonic Ricci Flow,
\begin{equation}\label{def.Liouville}
E_L(t):=\frac12\int_M \Big(\Abs{du(t)}_{g_0(t)}^2+2\bar{K}u(t)\Big) d\mu_{g_0(t)}.
\end{equation}
One of the main differences to the Ricci Flow case is that in our case the background metric $g_0$ is not fixed but is an evolving horizontal curve of metrics $g_0(t)$.\\

As observed in \cite{Struwe}, Jensen's inequality implies that  
\begin{equation}\label{eq.Jensenu}
2\fint_M u\, d\mu_{g_0}\leq \log\Big(\fint_M e^{2u}d\mu_{g_0} \Big)=\log\bigg(\frac{\text{vol}(M,g)}{\text{vol}(M,g_0)}\bigg)=0
\end{equation}
since the flow preserves the volume. In particular, for  surfaces of genus $\geq 1$,
\begin{equation}\label{est:H1-Liouville} 
\int_M\Abso{d u}^2d\mu_{g_0}\leq E_L.
\end{equation}
For the renormalised Ricci Flow, a short computation yields $\ddt E_L=-\int (K_g-\bar{K})^2d\mu_g$ (see \cite{Struwe}). In the case of Harmonic Ricci Flow, we do not obtain such a nice monotone behaviour, but we can still prove that $E_L$ is uniformly bounded along the flow. Thus \eqref{est:H1-Liouville} yields $H^1$-bounds for $u$ from which we can then also derive $H^2$-bounds.\\

In Section \ref{sec.curv}, the $H^2$-bounds obtained here will then be combined with estimates from the two previous sections, yielding higher regularity estimates and in particular the desired curvature bounds claimed in Proposition \ref{prop:main-result}.

\subsection{Evolution of the Liouville energy}
We start with an evolution inequality for the Liouville energy defined in \eqref{def.Liouville}.
\begin{prop}\label{prop.Liouville}
Let $(g(t)=e^{2u(t)}g_0(t),\phi(t))_{t\in[0,T)}$,  $T<\infty$, be a solution of \eqref{eq.flow} for which \eqref{ass:energy-density} holds. Then there exists a constant $C$, such that the Liouville energy satisfies
\begin{equation*}
\ddt E_L(t)\leq -\frac12\int_M (K_g-\bar{K})^2d\mu_g+C\cdot E_L(t)+C\cdot \Norm{\tau_{g}(\phi)}_{L^2(M,g)}^2 +C.
\end{equation*}
\end{prop}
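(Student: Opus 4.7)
The plan is to compute $\ddt E_L$ from the evolution equations for $u$ and $g_0$ and then bound each resulting contribution, with the negative $-\int(K_g-\bar K)^2 d\mu_g$ arising from a Gauss--Bonnet cancellation and absorbing smaller $\int(K_g-\bar K)^2 d\mu_g$-pieces coming from the error terms. The key structural fact that $\pt g_0(t)\in\Hol(g_0(t))$ is trace-free ensures that $d\mu_{g_0(t)}$ is preserved in time. Differentiating $|du|_{g_0}^2$ and integrating by parts gives
\begin{equation*}
\ddt E_L = -\tfrac12 \int_M\scal{\pt g_0,du\otimes du}_{g_0}\dmo + \int_M(\bar K-\Delta_{g_0}u)\,\pt u\,\dmo,
\end{equation*}
and the Gauss equation $\bar K-\Delta_{g_0}u=e^{2u}K_g$ converts the second integral into $\int_M K_g\,\pt u\,d\mu_g$. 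Substituting \eqref{eq:u} and simplifying once more with Gauss yields $\pt u=\bar K-K_g+\alpha(e(\phi,g)-\bar E(t))+\tfrac12\varrho-X(u)$, and the leading term $\int K_g(\bar K-K_g)\,d\mu_g=-\int(K_g-\bar K)^2 d\mu_g$ drops out by Gauss--Bonnet (using $\int K_g d\mu_g=\bar K\,\Vol$).

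Three of the four remaining contributions are relatively routine. The \emph{horizontal term} is bounded by $C\|\pt g_0\|_{L^\infty(g_0)}\|du\|_{L^2(g_0)}^2\leq C E_L$ via Corollary~\ref{corB} and \eqref{est:H1-Liouville}. The \emph{energy-density term} uses $\int(e(\phi,g)-\bar E)d\mu_g=0$ to replace $K_g$ by $K_g-\bar K$; Cauchy--Schwarz with the uniform pointwise bound on $e(\phi,g)-\bar E$ then gives $\leq\tfrac1{16}\int(K_g-\bar K)^2 d\mu_g+C$. For the \emph{$\varrho$-term} $\tfrac12\int K_g\varrho\,d\mu_g$, one again applies Gauss to rewrite it as $\tfrac12\int\varrho(\bar K-\Delta_{g_0}u)\,\dmo$; the constant piece vanishes since $\int\varrho\,\dmo=0$, and integration by parts yields $\tfrac12\int\scal{d\varrho,du}_{g_0}\,\dmo$, which is bounded by $C E_L+C\|\tau_g(\phi)\|_{L^2(g)}^2+C$ using the $H^1$-estimate on $\varrho$ from Lemma~\ref{lemma:rho-X}.

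The main obstacle is the \emph{$X$-term} $-\int K_g X(u)\,d\mu_g$. Converting the weight $e^{2u}$ out via Gauss gives $-\bar K\int X(u)\dmo+\int(\Delta_{g_0}u)X(u)\dmo$. The first piece is bounded by $C(1+E_L)$ thanks to the uniform $L^\infty$-bound on $X$ from Lemma~\ref{lemma:rho-X}. For the second piece, integrating by parts yields
\begin{equation*}
\int(\Delta_{g_0}u)X(u)\dmo=-\int\scal{du\otimes du,\nao X}_{g_0}\dmo+\tfrac12\int|du|_{g_0}^2\,\Div_{g_0}(X)\dmo.
\end{equation*}
A useful structural identity here, obtained by tracing the splitting \eqref{2.splittingtfT} and using that $\tf T$ is $g$-trace-free (hence $g_0$-trace-free) while $\pt g_0$ and the horizontal projection are trace-free, is $\Div_{g_0}(X)=-\varrho$. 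Combining this with Hölder's inequality, the uniform $L^p(g_0)$-bounds on $\nao X$ and on $\varrho$ from Lemma~\ref{lemma:rho-X} (valid for any $p<\infty$), and a 2D Gagliardo--Nirenberg interpolation for $\|du\|_{L^{p'}(g_0)}$---trading higher $L^p$-control of $du$ for a small multiple of $\|\Delta_{g_0}u\|_{L^2(g_0)}^2$, which by Gauss is in turn related to $\int(K_g-\bar K)^2 d\mu_g$---yields the desired bound. The delicate point is to arrange the exponents so that the final estimate is linear in $E_L$ plus $C\|\tau_g(\phi)\|_{L^2(g)}^2+C$, up to a small multiple of $\int(K_g-\bar K)^2 d\mu_g$ that gets absorbed into the leading negative term, leaving the margin $-\tfrac12\int(K_g-\bar K)^2 d\mu_g$ required by the proposition.
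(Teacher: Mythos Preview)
Your decomposition of $\ddt E_L$ and treatment of the horizontal, energy-density, and $\varrho$-terms match the paper. The gap is in the $X$-term, specifically the piece $J_2:=\int_M(\Delta_{g_0}u)\,X(u)\,\dmo$.

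Your plan is to bound the two pieces
\[
-\int_M\langle du\otimes du,\nao X\rangle_{g_0}\dmo,\qquad \tfrac12\int_M|du|_{g_0}^2\,\Div_{g_0}(X)\,\dmo
\]
separately via H\"older and 2D Gagliardo--Nirenberg, producing a small multiple of $\|\Delta_{g_0}u\|_{L^2(g_0)}^2$ to be absorbed into $-\int_M(K_g-\bar K)^2 d\mu_g$. This absorption fails because of the conformal weight: by Gauss,
\[
\|\Delta_{g_0}u\|_{L^2(g_0)}^2=\int_M(\bar K-e^{2u}K_g)^2\dmo=\int_M e^{4u}K_g^2\,\dmo-\bar K^2\Vol(M,g_0),
\]
whereas the negative term available is $\int_M(K_g-\bar K)^2 d\mu_g=\int_M e^{2u}K_g^2\,\dmo-\bar K^2\Vol(M,g)$. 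Comparing the two requires a pointwise upper bound on $e^{2u}$, but at this stage you have no $L^\infty$-control on $u$; the Moser--Trudinger estimate \eqref{est:Moser-Trud} is only derived \emph{after} the Liouville energy bound, as a corollary of the present proposition. So the interpolation route is circular.

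The repair is actually hidden in your own formula. Since $du\otimes du$ is symmetric, the contraction with $\nao X$ sees only its symmetric part $\tfrac12 L_Xg_0$, and your two pieces recombine to
\[
J_2=-\tfrac12\int_M\big\langle \tf{(L_Xg_0)},\,du\otimes du\big\rangle_{g_0}\dmo.
\]
Now take the \emph{trace-free} part of the splitting \eqref{2.splittingtfT}: $\tf{(L_Xg_0)}=e^{-2u}\tf T(\phi,g)-\pt g_0$, and both summands are bounded in $L^\infty(M,g_0)$ by \eqref{est:uniform-Linfty-T} and Corollary~\ref{corB}. Hence $|J_2|\le C\int_M|du|_{g_0}^2\dmo\le C\,E_L$, with no recourse to second derivatives of $u$. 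This is precisely the paper's argument; the key structural point you missed is that the $L^p$ bounds on $\nao X$ and $\varrho$ from Lemma~\ref{lemma:rho-X} are not strong enough here, but the \emph{combination} $\tf{(L_Xg_0)}$ is controlled in $L^\infty$ directly from the flow equations.
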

\begin{proof}
Using \eqref{Gausseq}, \eqref{eq:u} and the fact that $\pt g_0$ is trace-free and thus $\pt d\mu_{g_0}=0$, we find that the Liouville energy evolves according to
\begin{equation}\label{est:Liouville-1}
\begin{aligned}
\ddt E_L(t)&=\int_M u_t\cdot (-\Delta_{g_0 }u+\bar{K})d\mu_{g_0}-\frac12\int_M \Scal{\pt g_0, du\otimes du}_{g_0}\dmo\\
&=\int_M K_g \cdot u_t\, d\mu_g+R_1(t)\\
&=-\int_M(K_g-\bar{K})^2 d\mu_g+\alpha\int_M K_g\cdot \big(e(\phi,g)-\bar E(t)\big)\,d\mu_g\\
&\quad\, +\int_M e^{2u}K_g\cdot \frac12 \varrho \,\dmo-\int_M e^{2u}K_g\cdot du(X)\,\dmo +R_1(t)\\
&= I_1(t) + I_2(t) + I_3(t) + I_4(t) + R_1(t),\phantom{\int}
\end{aligned}
\end{equation}
with an error term $R_1(t) = -\frac12\int_M \langle \pt g_0, du\otimes du\rangle_{g_0}\dmo$ that according to \eqref{est:g-Ck-right-t} and \eqref{est:H1-Liouville} is bounded by
\begin{equation*}
\Abs{R_1(t)}\leq \Norm{\pt g_0}_{L^\infty(M,g_0)}\int_M\Abso{du}^2\dmo\leq C\cdot E_L(t).
\end{equation*}
For the second term in \eqref{est:Liouville-1}, we use $\int_M \big( e(\phi,g)-\bar E(t)\big)d\mu_g=0$ and the assumption that the energy density is bounded, which yields the following estimate
\begin{align*}
I_2(t) &= \alpha\int_M K_g\cdot \big(e(\phi,g)-\bar E(t)\big)\,d\mu_g = \alpha\int_M \big(K_g-\bar{K}\big) \big(e(\phi,g)-\bar E(t)\big)\,d\mu_g\\
&\leq \frac12\int_M (K_g-\bar{K})^2 d\mu_g+C\int_M \big(e(\phi,g)-\bar{E}(t)\big)^2 d\mu_g\\
&= \frac12 \int_M (K_g-\bar{K})^2 d\mu_g + C\int_M e(\phi,g)^2 d\mu_g - C\bar{E}(t)^2\cdot \Vol(M,g)\\
&\leq \frac12 \int_M (K_g-\bar{K})^2 d\mu+C.
\end{align*}
Since $I_1(t) = -\int_M(K_g-\bar{K})^2 d\mu_g$, we thus obtain that 
\begin{equation*}
I_1(t)+I_2(t)\leq -\frac{1}{2} \int_M(K_g-\bar{K})^2 d\mu_g+C.
\end{equation*}

Using \eqref{Gausseq} as well as $\int_M\varrho\,\dmo=0$, we find for the third term in \eqref{est:Liouville-1} that
\begin{equation}
\begin{aligned}
I_3(t) &= \frac12 \int_M e^{2u}K_g\cdot \varrho\, \dmo = -\frac12 \int_M\Deltao u \cdot \varrho\, \dmo\\
&\leq \frac14\Big(\int_M\Abso{du}^2\dmo+\int_M\Abso{d\varrho}^2\dmo \Big)\\
&\leq C\int_M\Abso{du }^2\dmo+C\int_M\Abs{\tau_g(\phi)}^2d\mu_g +C\\
&\leq C\cdot E_L(t)+C\cdot \Norm{\tau_{g}(\phi)}_{L^2(M,g)}^2 +C
\end{aligned}
\end{equation}
using Lemma \ref{lemma:rho-X}. Finally, we write 
\begin{align*}
I_4(t) &= -\int e^{2u}K_g \, du(X)\, \dmo\\
&=-\bar{K}\int_M du(X)\,\dmo+\int_M\Delta_{g_0}u\cdot du(X)\,\dmo\\
&=J_1(t) + J_2(t),
\end{align*}
and observe that 
\begin{equation*}
\Abs{J_1(t)}\leq C\int_M\Abso{du}^2\dmo + C\Norm{X}_{L^2(M,g_0)}^2 \leq C\cdot E_L(t)+C,
\end{equation*}
again by Lemma \ref{lemma:rho-X}. The remaining integral $J_2(t)$ is the highest order term and thus requires a more careful estimate making use of its precise structure. For this reason, let $(f_\eps)$ be the flow generated by the vectorfield $X$. Since $du(X)=\peps (u\circ f_\eps)\vert_{\eps=0}$, we can write 
\begin{equation}
\begin{aligned} 
J_2(t) &=\int_M \Delta_{g_0}u\cdot  \peps(u\circ f_\eps) \dmo \big\vert_{\eps=0}=-\int \Scal{du,\peps(d(u\circ f_\eps))}_{g_0}\dmo\big \vert_{\eps=0} \\
&=-\frac{d}{d\eps}\frac12\int_M \Abs{d(u\circ f_\eps)}_{f_\eps^*g_0}^2d\mu_{f_\eps^*g_0}\big\vert_{\eps=0} \\
&\quad\, -\frac12\int_M \Big( \Scal{L_Xg_0, du\otimes d u}_{g_0}-\Abso{du}\frac12\tr_{g_0}(L_Xg_0)\Big)\dmo\\
&=-\frac12\int_M\Scal{\tf{L_Xg_0},du\otimes du}_{g_0} \; \dmo,
\end{aligned}
\end{equation}
where we use that 
$\peps\vert_{\eps=0}(\abs{du}_{f_\eps^* g_0}^2)=\peps((f_\eps^*g_0)^{ij}\pxi u\,\pxj u)=-g_0^{ik}g_0^{jl}(L_Xg_0)_{kl} \pxi u\,\pxj u$. Remark that it is no coincidence that the above expression depends only on the trace-free part and not the trace-part of $L_Xg_0$ but rather a consequence of the fact that the Dirichlet energy (here considered of $u$) is conformally invariant in dimension two. By construction 
\begin{equation*}
T(\phi,g)-\big(e^{2u}L_Xg_0+X(e^{2u})g_0\big)=\pt g=2\pt u \cdot e^{2u}g_0+e^{2u}\pt g_0,
\end{equation*}
so the trace-free part of $L_Xg_0$ is given by 
\begin{equation*}
\tf{(L_Xg_0)}=e^{-2u}\,\tf T(\phi,g)-\pt g_0.
\end{equation*}
Thus, by assumption \eqref{ass:energy-density} and the bound on $\pt g_0$ from \eqref{est:g-Ck-right-t}, we obtain that  
\begin{equation*}
\Abs{J_2(t)}\leq C\Norm{\tf{L_Xg_0}}_{L^{\infty}(M,g_0)}\int_M\Abso{du}^2\dmo\leq C\cdot E_L(t).
\end{equation*}
Plugging everything back into \eqref{est:Liouville-1} completes the proof of the proposition.
\end{proof}

\subsection{$H^1$-estimates for $u$ and other corollaries}
In view of \eqref{energy-decay}, an immediate consequence of Proposition \ref{prop.Liouville} is that the Liouville energy is uniformly bounded on $[0,T)$, e.g.~by Gr\"{o}nwall's Lemma. Moreover, it also implies that $\int_M(K_g-\bar{K})^2 d\mu_g$ and thus also $\int_M K_g^2d\mu_g$ is integrable in time. Hence, using also \eqref{est:H1-Liouville}, we obtain the following corollary of Proposition \ref{prop.Liouville}. 
\begin{cor}\label{cor:H1-u}
Let $(g,\phi)$ be a solution of \eqref{eq.flow} satisfying \eqref{ass:energy-density}, i.e. with bounded energy density on $M \times [0,T)$. Then 
\begin{equation} \label{H1-est}
\sup_{t\in[0,T)}\int_M\Abs{du}_{g_0(t)}^2d\mu_{g_0(t)}+\TMint \Abs{K_g}^2 d\mu_{g(t)}dt<\infty.
\end{equation}
\end{cor}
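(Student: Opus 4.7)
The plan is to derive Corollary \ref{cor:H1-u} as a direct consequence of Proposition \ref{prop.Liouville}, combined with the monotonicity of the Dirichlet energy \eqref{energy-decay} and Gr\"onwall's lemma; no further geometric input is required.

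First, I would integrate the identity \eqref{energy-decay} over $[0, T)$. Since both terms on its right-hand side are non-positive, $t \mapsto E(\phi(t), g(t))$ is non-increasing, and consequently
\begin{equation*}
\int_0^T \Norm{\tau_g(\phi(t))}_{L^2(M,g(t))}^2 \, dt \leq E(\phi(0), g(0)) < \infty.
\end{equation*}

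Second, I would discard the non-positive term $-\frac12\int_M (K_g - \bar K)^2 d\mu_g$ on the right-hand side of Proposition \ref{prop.Liouville} to obtain the simple differential inequality $\ddt E_L(t) \leq C \cdot E_L(t) + h(t)$, with $h(t) := C\Norm{\tau_g(\phi)}_{L^2(M,g)}^2 + C$ integrable on $[0,T)$ by the first step. Gr\"onwall's lemma then yields $\sup_{t \in [0,T)} E_L(t) < \infty$, and \eqref{est:H1-Liouville} immediately converts this into the first (i.e.\ $H^1$-type) bound in \eqref{H1-est}.

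Third, to obtain the space-time integrability of $K_g^2$, I would instead integrate the inequality of Proposition \ref{prop.Liouville} on $[0, t]$, for an arbitrary $t < T$, and rearrange to get
\begin{equation*}
\frac12 \int_0^t\!\!\int_M (K_g - \bar{K})^2 d\mu_g\, ds \leq E_L(0) - E_L(t) + C\int_0^t E_L(s)\, ds + C\int_0^t \Norm{\tau_g(\phi)}_{L^2(M,g)}^2 ds + Ct.
\end{equation*}
The right-hand side is uniformly bounded in $t < T$ by the first two steps, provided $E_L$ is bounded from below. This is the only even mildly subtle point of the argument: for $\bar{K} = 0$ one has $E_L \geq 0$ trivially, while for $\bar{K} = -1$ Jensen's inequality \eqref{eq.Jensenu} gives $\int_M u\, d\mu_{g_0} \leq 0$, so $E_L \geq \frac12\int_M \Abso{du}^2 d\mu_{g_0} \geq 0$. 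Letting $t \nearrow T$ then yields $\TMint (K_g - \bar{K})^2 d\mu_g\, dt < \infty$, and the pointwise bound $K_g^2 \leq 2(K_g - \bar{K})^2 + 2\bar{K}^2$ together with the conservation of $\Vol(M, g)$ along the flow delivers the second estimate in \eqref{H1-est}. I do not anticipate any substantial obstacle here, since all the hard analytic work has already been packaged into Proposition \ref{prop.Liouville}; the remaining argument is a routine combination of Gr\"onwall and time-integration.
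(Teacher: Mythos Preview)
Your proposal is correct and follows essentially the same route as the paper: both derive the bound on $E_L$ from Proposition~\ref{prop.Liouville} via Gr\"onwall's lemma together with the integrability of $\Norm{\tau_g(\phi)}_{L^2(M,g)}^2$ coming from \eqref{energy-decay}, then convert this to the $H^1$-bound using \eqref{est:H1-Liouville}, and finally obtain the space-time integrability of $K_g^2$ by integrating the inequality of Proposition~\ref{prop.Liouville} in time. Your explicit observation that $E_L\geq 0$ (needed to control $-E_L(t)$ in the integrated inequality) is a detail the paper leaves implicit, but the argument is otherwise identical.
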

In the remainder of this subsection, following the ideas of Struwe \cite{Struwe}, we prove two further corollaries whose main point it is to show that we can integrate first spatial derivatives of $u$ multiplied with arbitrary powers of the conformal factor. This will be crucial when we prove $H^2$-estimates in the next subsection. The following is a consequence of the $H^1$-(semi-norm-)bound from Corollary \ref{cor:H1-u} and the Moser-Trudinger Inequality.
\begin{cor}\label{Cor:Moser-Trudinger}
Let $(g,\phi)$ be as above. Then
\begin{equation} \label{est:ut}
\TMint \Big(e^{-2u}\Abs{\Deltao u}^2+\Abs{u_t}^2e^{2u}\Big) \dmo\, dt<\infty.
\end{equation}
Furthermore, for any number $k\in\R$, we have
\begin{equation} \label{est:Moser-Trud} 
\sup_{t\in[0,T)}\int_M e^{k\abs{u}}\dmo <\infty
\end{equation}
and 
\begin{equation} \label{est:weighted-H1}
\TMint e^{ku}\Abso{du}^2 \dmo \, dt<\infty.
\end{equation}
\end{cor}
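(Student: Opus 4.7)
The plan is to establish the estimates in the following bootstrap order: first \eqref{est:Moser-Trud}, then the $e^{-2u}|\Deltao u|^2$ part of \eqref{est:ut}, then \eqref{est:weighted-H1}, and finally the $|u_t|^2e^{2u}$ part of \eqref{est:ut}. The main obstacle is the first step: the $H^1$-semi-norm bound from Corollary \ref{cor:H1-u} is not itself enough for the Moser-Trudinger inequality to yield exponential integrability of $u$, since the inequality is naturally stated for mean-zero functions. One must therefore separately control the mean $\bar u(t):=\fint_M u\,\dmo$, and for the torus case this requires a loop back through Moser-Trudinger itself combined with the volume constraint.

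For \eqref{est:Moser-Trud}, decompose $u=\bar u+v$ with $\fint_M v\,\dmo=0$. Since $\|dv\|_{L^2(g_0(t))}=\|du\|_{L^2(g_0(t))}$ is uniformly bounded in $t$ and the metrics $g_0(t)$ are uniformly equivalent (Remark \ref{rem2CorB}), the Moser-Trudinger inequality gives $\int_M e^{k|v|}\dmo\leq C$ for every $k\in\R$. Jensen's inequality \eqref{eq.Jensenu} shows $\bar u\leq 0$. For $\gamma\geq 2$, the bound on the Liouville energy coming from Proposition \ref{prop.Liouville} via Gr\"onwall (combined with \eqref{energy-decay} to control the $\|\tau_g(\phi)\|_{L^2}^2$ term) together with $\|du\|_{L^2}$ bounded yields $\int_M u\,\dmo=\tfrac12\|du\|_{L^2}^2-E_L\geq -C$, hence $\bar u\geq -C$. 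In the torus case, where $\bar K=0$ and $E_L$ gives no such information, I would instead feed $\int_M e^{2v}\dmo\leq C$ into the preserved volume identity $1=\int_M e^{2u}\dmo=e^{2\bar u}\int_M e^{2v}\dmo$ to get $\bar u\geq-\tfrac12\log C$. With $\bar u$ bounded in both cases, \eqref{est:Moser-Trud} follows from $e^{k|u|}\leq e^{k|\bar u|}e^{k|v|}$.

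Next, the Gauss equation \eqref{Gausseq} reads $\Deltao u=\bar K-e^{2u}K_g$, so expanding the square gives
\begin{equation*}
\int_M e^{-2u}|\Deltao u|^2\dmo = \bar K^2\int_M e^{-2u}\dmo-2\bar K\int_M K_g\,d\mu_g+\int_M K_g^2\,d\mu_g.
\end{equation*}
Integrating over $[0,T)$, the first term is handled by \eqref{est:Moser-Trud} and $T<\infty$, the second is a topological constant via Gauss-Bonnet, and the third is finite by Corollary \ref{cor:H1-u}. For \eqref{est:weighted-H1} with $k\neq 0$, integration by parts gives $\int_M e^{ku}|du|^2\dmo=-\tfrac{1}{k}\int_M e^{ku}\Deltao u\,\dmo$, and splitting $e^{ku}=e^{(k+1)u}\cdot e^{-u}$ followed by Cauchy-Schwarz in space and then in time bounds $\TMint e^{ku}|du|^2\dmo\,dt$ by
\begin{equation*}
\tfrac{1}{|k|}\left(\TMint e^{2(k+1)u}\dmo\,dt\right)^{1/2}\left(\TMint e^{-2u}|\Deltao u|^2\dmo\,dt\right)^{1/2},
\end{equation*}
with both factors finite by the preceding two steps; the case $k=0$ is immediate from Corollary \ref{cor:H1-u}.

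Finally, for the $|u_t|^2e^{2u}$ part of \eqref{est:ut}, substitute the evolution equation \eqref{eq:u} and control each of the five resulting summands of $e^{2u}|u_t|^2$ separately. The $e^{-2u}\Deltao u$ term reproduces the spacetime bound on $e^{-2u}|\Deltao u|^2$ just established; the $\bar K(1-e^{-2u})$ term reduces to $\int e^{\pm 2u}\dmo$, handled by \eqref{est:Moser-Trud}; the $\alpha(e(\phi,g_0)e^{-2u}-\bar E)$ term is controlled pointwise, since $e(\phi,g_0)e^{-2u}=e(\phi,g)$ is bounded by assumption \eqref{ass:energy-density} and $\bar E$ is monotonically decreasing by \eqref{energy-decay}, so after multiplication by $e^{2u}$ one again appeals to \eqref{est:Moser-Trud}; the $\varrho$ term is estimated by H\"older combining the $L^p$ bounds on $\varrho$ from Lemma \ref{lemma:rho-X} with \eqref{est:Moser-Trud}; and the $du(X)$ term is dominated by $C\,e^{2u}|du|^2$ since $\|X\|_{L^\infty}$ is uniformly bounded by Lemma \ref{lemma:rho-X}, at which point \eqref{est:weighted-H1} with $k=2$ closes the argument.
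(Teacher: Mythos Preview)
Your proof is correct and follows essentially the same route as the paper: Moser--Trudinger plus control of $\bar u$ for \eqref{est:Moser-Trud}, the Gauss equation for the $e^{-2u}\abs{\Deltao u}^2$ bound, integration by parts for \eqref{est:weighted-H1}, and term-by-term use of \eqref{eq:u} for the $\abs{u_t}^2e^{2u}$ bound. The only difference is that the paper bounds $\bar u$ from below via the volume constraint uniformly for \emph{all} genera (exactly your torus argument), rather than invoking the Liouville energy separately when $\gamma\geq 2$; your detour is valid but unnecessary.
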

\begin{proof}
We first prove \eqref{est:Moser-Trud}. By the Moser-Trudinger Inequality, there exists $\beta_0>0$ such that
\begin{equation*}
\int_M e^{\beta_0\abs{v}^2} d\mu_{g_0} \leq C < \infty, \qquad \forall v\in H^1 \text{ with } \Norm{v}_{H^1(M,g_0)}\leq 1
\end{equation*}
and this is valid for all $t$ with the same constants $\beta_0$ and $C$, since the metrics $g_0(t)$ are uniformly equivalent. Since we only have a bound on the $H^1$ semi-norm of $u$, we combine this with the Poincar\'{e} inequality. More precisely, given any $t \in [0,T)$, we let
\begin{equation*}
v= \frac{(u-\bar{u})}{\norm{u-\bar{u}}_{H^1(M,g_0)}}, \quad \text{ where } \quad \Norm{u-\bar{u}}_{H^1(M,g_0)}\leq C\Norm{du}_{L^2(M,g_0)}\leq c_0.
\end{equation*}
Then from the Moser-Trudinger Inequality, setting $\beta = \beta_0 / c_0^2$, we obtain
\begin{equation}\label{eq.MTuubar}
\int_M e^{\beta\abs{u-\bar{u}}^2} d\mu_{g_0} \leq C.
\end{equation}
Combining \eqref{eq.MTuubar} with $\int_M e^{2u} d\mu_{g_0} = \int_M d\mu_g = V_0$, we obtain
\begin{align*}
V_0 \cdot e^{-2\bar{u}} &= e^{-2\bar{u}}\int_M e^{2u} d\mu_{g_0} = \int_M e^{2(u-\bar{u})} d\mu_{g_0}\\
&\leq \int_M e^{\beta\abs{u-\bar{u}}^2+C_\beta} \, d\mu_{g_0} = e^{C_\beta} \int_M e^{\beta\abs{u-\bar{u}}^2} d\mu_{g_0}  \leq C.
\end{align*}
Hence $e^{-2\bar{u}}$ is uniformly bounded from above, or equivalently $\bar{u}$ is uniformly bounded from below, $\bar{u} \geq -c_1>-\infty$. In view of \eqref{eq.Jensenu}, we also know that $\bar{u}\leq 0$, and hence $\abs{\bar{u}}\leq c_1$ for all $t\in[0,T)$. Finally, using \eqref{eq.MTuubar} once more, we find
\begin{equation*}
C \geq \int_M e^{\beta\abs{u-\bar{u}}^2} d\mu_{g_0} \geq \int_M e^{\frac{\beta}{2}\abs{u}^2 - \beta\abs{\bar{u}}^2} d\mu_{g_0} \geq \int_M e^{\frac{\beta}{2}\abs{u}^2}d\mu_{g_0} \cdot e^{-\beta c_1^2}
\end{equation*}
or equivalently 
\begin{equation*}
\int_M e^{\frac{\beta}{2}\abs{u}^2}d\mu_{g_0} \leq C\cdot c^{\beta c_1^2}
\end{equation*}
which yields the bound claimed in \eqref{est:Moser-Trud}.\\

Next, to prove the first part of \eqref{est:ut}, we note that by \eqref{Gausseq}
\begin{equation}\label{doubleintegraleq}
\begin{aligned}
\TMint e^{-2u}\Abs{\Deltao u}^2 \dmo \, dt &= \TMint e^{-2u}\big({-e^{2u}}K_g+\bar{K}\big)^2 \dmo \, dt\\
&\leq C \TMint \Big(e^{-2u}e^{4u}K_g^2+e^{-2u}\bar{K}^2\Big) \dmo \, dt\\
&\leq C \TMint K_g^2\, d\mu_g \, dt + C\bar{K}^2 \TMint e^{-2u} d\mu_{g_0} \, dt,\\
&\leq C <\infty
\end{aligned}
\end{equation}
where the last two steps follows from \eqref{H1-est} and \eqref{est:Moser-Trud}.\\

Before we show the second estimate of \eqref{est:ut}, we now derive the bound \eqref{est:weighted-H1} on weighted integrals of $du$, where in view of Corollary \ref{cor:H1-u}, we can assume that $k\neq 0$. We rewrite 
\begin{equation}
\begin{aligned} 
\int_Me^{ku}\Abso{du}^2 \dmo&=\frac1k\int_M \Scal{d(e^{ku}),du}_{g_0}\dmo=-\frac1k \int_M \Deltao u\cdot e^{ku} \dmo\\
&\leq \int_M \Abs{\Deltao u}^2 e^{-2u}\dmo+C\int_M e^{2(k+1)u}\dmo.
\end{aligned}
\end{equation}
From this formula, applying the estimates \eqref{est:Moser-Trud} and \eqref{doubleintegraleq}, which we have already proven, we obtain \eqref{est:weighted-H1}.\\

Finally, from the evolution equation \eqref{eq:u}, we have
\begin{align*}
\TMint e^{2u}\Abs{u_t}^2\, \dmo \, dt  &\leq C \TMint e^{-2u}\Abs{\Delta_{g_0}u}^2 \dmo \, dt\\
&\quad\, +C\TMint \bar{K}^2\big(1-e^{-2u}\big)^2 e^{2u} \dmo \, dt\\
&\quad\,+ C\TMint \alpha^2 e(\phi,g_0)^2 e^{-2u} \dmo \, dt\\
&\quad\, +C\TMint \Big(\varrho^2 +\Abs{du(X)}^2\Big) e^{2u} \dmo \, dt.
\end{align*}
While we have bounded the first term on the right hand side in \eqref{doubleintegraleq}, all the other terms are bounded as well, in view of \eqref{est:Moser-Trud}, \eqref{est:weighted-H1}, the bound on the energy density, and the bounds on $X$ and $\varrho$ in Lemma \ref{lemma:rho-X}, so this yields the second estimate of \eqref{est:ut}.
\end{proof}

For the next corollary, as well as in several places in the next subsection, we will use the standard interpolation inequality 
\begin{equation} \label{est:stand-interpol}
\Norm{f}_{L^4}^2\leq C\Norm{f}_{L^2}\cdot \Norm{f}_{H^1}
\end{equation}
and variants of it. The inequality \eqref{est:stand-interpol} follows from the Sobolev embedding $W^{1,1}\hookrightarrow L^2$ applied to $f^2$. Our last corollary here is the following.
\begin{cor}\label{Cor:W14-u}
Let $(g,\phi)$ be as above. Then for any $k\in\R$ there exists a function $f\in L^1([0,T])$ such that 
\begin{equation*}
\int_M\Abso{du}^4e^{2ku}\dmo\leq f(t)\cdot \Big(\int_M\Abs{ \Deltao u}^2\dmo +1\Big).
\end{equation*}
\end{cor}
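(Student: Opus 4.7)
My plan is to apply the interpolation inequality \eqref{est:stand-interpol}, highlighted in the paragraph immediately preceding the statement as the central tool, to the weighted auxiliary function $v:=|du|_{g_0}\,e^{ku/2}$, chosen so that $\|v\|_{L^4(M,g_0)}^4=\int_M|du|^4 e^{2ku}\,d\mu_{g_0}$. Squaring \eqref{est:stand-interpol} and expanding $\|v\|_{H^1}^2=\|v\|_{L^2}^2+\|\nabla v\|_{L^2}^2$ yields
\[
\int_M|du|^4 e^{2ku}\,d\mu_{g_0}\;\leq\;C\,\|v\|_{L^2}^2\,\|v\|_{H^1}^2\;=\;C\Bigl(\int_M|du|^2 e^{ku}\,d\mu_{g_0}\Bigr)\bigl(\|v\|_{L^2}^2+\|\nabla v\|_{L^2}^2\bigr).
\]
By \eqref{est:weighted-H1} in Corollary~\ref{Cor:Moser-Trudinger} the prefactor $\int_M|du|^2 e^{ku}\,d\mu_{g_0}$ is already an $L^1([0,T])$-function, and this will form the main piece of the desired $f(t)$. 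The task reduces to bounding $\|v\|_{H^1}^2$ by a constant multiple of $\int_M|\Delta_{g_0} u|^2\,d\mu_{g_0}+1$.

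A direct computation using Kato's inequality $|\nabla|du||\leq|\nabla^2 u|$ gives
\[
\|\nabla v\|_{L^2}^2\;\leq\;C\!\int_M e^{ku}|\nabla^2 u|^2\,d\mu_{g_0}\;+\;Ck^2\!\int_M e^{ku}|du|^4\,d\mu_{g_0}.
\]
The second weighted integral I would handle by splitting off the exponential weight with H\"older at exponent $1+\eta$ for a small $\eta>0$, controlling $\|e^{ku}\|_{L^{(1+\eta)/\eta}}$ uniformly in $t$ by Moser-Trudinger \eqref{est:Moser-Trud}, and then estimating the resulting slightly super-critical $L^{4(1+\eta)}$-norm of $du$ by the two-dimensional Gagliardo-Nirenberg inequality $\|w\|_{L^q}^q\leq C\|w\|_{L^2}^2\|w\|_{H^1}^{q-2}$ combined with the 2D Bochner identity $\int_M|\nabla^2 u|^2\,d\mu_{g_0}\leq\int_M|\Delta_{g_0} u|^2\,d\mu_{g_0}+C$ on the constant-curvature background $(M,g_0)$ and the uniform $L^2$-bound on $du$ from Corollary~\ref{cor:H1-u}.

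For the weighted Hessian integral, which does not submit directly to Gagliardo-Nirenberg because no higher-order bound on $\nabla^2 u$ is available, I would instead integrate by parts twice against the weight $e^{ku}$ using the 2D Bochner identity $\tfrac12\Delta|du|^2=|\nabla^2 u|^2+\langle d\Delta u,du\rangle+\bar K|du|^2$ to rewrite $\int_M e^{ku}|\nabla^2 u|^2\,d\mu_{g_0}$ as a linear combination of $\int_M e^{ku}|\Delta_{g_0} u|^2\,d\mu_{g_0}$, $\int_M e^{ku}|du|^4\,d\mu_{g_0}$, a mixed term $\int_M e^{ku}|du|^2\Delta_{g_0} u\,d\mu_{g_0}$ absorbable by Cauchy-Schwarz, and an $L^1_t$ lower-order contribution; the weighted $|\Delta_{g_0} u|^2$-term is then peeled in the same H\"older/Moser-Trudinger way as above.

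The main obstacle I anticipate is the small super-critical exponent loss introduced by the H\"older step, which yields an overall bound of the shape
\[
\int_M|du|^4 e^{2ku}\,d\mu_{g_0}\;\leq\;C_\eta\Bigl(\int_M|du|^2 e^{ku}\,d\mu_{g_0}\Bigr)\Bigl(\int_M|\Delta_{g_0} u|^2\,d\mu_{g_0}+1\Bigr)^{1+O(\eta)}
\]
rather than the linear dependence stated in the corollary. To match the statement one splits the last factor as $(\int|\Delta_{g_0} u|^2+1)\cdot(\int|\Delta_{g_0} u|^2+1)^{O(\eta)}$ and absorbs the excess power into $f(t)$; for $\eta>0$ chosen small enough the absorbed factor lies in $L^1([0,T])$, as can be verified using the smoothness of the flow on $[0,T)$ together with the $L^1_t$-bound $\int_0^T\!\int_M e^{-2u}|\Delta_{g_0} u|^2\,d\mu_{g_0}\,dt<\infty$ from \eqref{est:ut} and Moser-Trudinger \eqref{est:Moser-Trud}, which jointly allow integrability to be transferred from the weighted Hessian control to the unweighted one.
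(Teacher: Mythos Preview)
Your overall strategy---apply a Sobolev/interpolation inequality to a weighted function built from $du$ and $e^{ku}$, then pull out $\int_M|du|^2 e^{ku}\,d\mu_{g_0}\in L^1_t$ as the factor $f(t)$---is the right one, and it is also the paper's. But your specific choice $v=|du|_{g_0}e^{ku/2}$ creates a difficulty that does not close, whereas the paper's choice avoids it entirely.

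When you compute $\|\nabla v\|_{L^2}^2$ you are left with the \emph{weighted} second-order quantity $\int_M e^{ku}|\nabla^2 u|^2$, and after your Bochner manipulation with $\int_M e^{ku}|\Delta_{g_0}u|^2$. Stripping the weight by H\"older produces $\|\Delta_{g_0}u\|_{L^{2(1+\eta)}}^2$, and the Gagliardo--Nirenberg inequality you invoke for $w=\Delta_{g_0}u$ would require control of $\|\Delta_{g_0}u\|_{H^1}$, i.e.\ third derivatives of $u$, which are unavailable. Your fallback---absorbing $(\int_M|\Delta_{g_0}u|^2+1)^{O(\eta)}$ into $f(t)$---is also not justified: at this point in the argument there is no $L^p_t$ bound whatsoever on $\int_M|\Delta_{g_0}u|^2$ (that is precisely what Proposition~\ref{prop:H2-u} establishes \emph{using} this corollary), and the available weighted bound $\int_0^T\!\int_M e^{-2u}|\Delta_{g_0}u|^2<\infty$ from \eqref{est:ut} cannot be transferred to the unweighted quantity without an $L^\infty$ bound on $e^{2u}$, which is not yet known. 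So the argument is circular.

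The paper's remedy is a small but decisive change of the test function: apply the Sobolev embedding $W^{1,1}\hookrightarrow L^2$ (which underlies \eqref{est:stand-interpol}) to $|du|_{g_0}^2 e^{ku}$ rather than to $|du|_{g_0}e^{ku/2}$. Differentiating produces the term $e^{ku}|du|_{g_0}|\nabla_{g_0}^2u|$ inside an $L^1$-norm, and Cauchy--Schwarz on this $L^1$ integral separates it cleanly into $\bigl(\int_M e^{2ku}|du|_{g_0}^2\bigr)^{1/2}$ times the \emph{unweighted} $\bigl(\int_M|\nabla_{g_0}^2u|^2\bigr)^{1/2}$. One then obtains directly
\[
\int_M|du|_{g_0}^4 e^{2ku}\,d\mu_{g_0}\leq C\Bigl(\int_M|du|_{g_0}^2 e^{2ku}\,d\mu_{g_0}\Bigr)\int_M\bigl(|du|_{g_0}^2+|du|_{g_0}^4+|\nabla_{g_0}^2u|^2\bigr)\,d\mu_{g_0},
\]
and the second factor is bounded by $C(\int_M|\Delta_{g_0}u|^2+1)$ using \eqref{eq.HessLap}, the uniform $H^1$ bound, and the unweighted case of \eqref{est:stand-interpol}. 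No supercritical exponent, no weighted Hessian, no circularity.
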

\begin{proof}
Using that the Sobolev embeddings $W^{1,1}(M,g_0(t))\hookrightarrow L^2(M,g_0(t))$ are uniformly bounded for $t\in[0,T)$, we estimate
\begin{equation}
\begin{aligned}
\int_M\Abso{du}^4e^{2ku}\dmo&=\Norm{\abso{du}^2e^{ku}}_{L^2(M,g_0)}^2 \leq C \Norm{\abso{du}^2e^{ku}}_{W^{1,1}(M,g_0)}^2\\
&= C\cdot \Big(\int_M e^{ku}\Big(\Abso{du}^2+\Abso{du}^3+\Abso{\na_{g_0}du}\cdot \Abso{du} \Big)\dmo\Big)^2\\
&\leq C \int_M\Abso{du}^2e^{2ku}\dmo \cdot \int_M \Big(\Abso{du}^2 +\Abso{du}^4+\Abso{\nao du}^2\Big)\dmo.
\end{aligned}
\end{equation}

Recall from Corollary \ref{cor:H1-u} that $\int_M \abso{du}^2 \dmo \leq C$ is bounded uniformly, and from Corollary \ref{Cor:Moser-Trudinger} that the function $f_k:=\int_M\abso{du}^2e^{2ku}\dmo$ is integrable in time. Furthermore, we have
\begin{equation}\label{eq.HessLap}
\int_M \Abso{\nao du}^2\dmo\leq \int_M \Big(\Abs{\Deltao u}^2 +C\cdot\Abso{du}^2\Big)\dmo \leq C\Big(\int_M \Abs{\Deltao u}^2 \dmo + 1\Big),
\end{equation}
where $C$ comes from the curvature of $g_0$ (and is thus obviously bounded). Finally, by \eqref{est:stand-interpol}, we also find
\begin{equation*}
\int_M \Abso{du}^4\dmo\leq C \int_M \Abso{du}^2 \dmo \cdot \int_M \Big(\Abso{du}^2+\Abso{\nao du}^2\Big)\dmo \leq C\Big(\int_M \Abs{\Deltao u}^2 \dmo + 1\Big).
\end{equation*}
This concludes the argument.
\end{proof}

\subsection{$H^2$-estimates for $u$}
In this subsection, we follow the overall argument of \cite{Struwe}, with the necessary modifications due to the more complicated evolution equations, to derive $H^2$-estimates for the conformal factor, making use of the three corollaries of the last subsection. The goal is to prove the following.
\begin{prop}\label{prop:H2-u}
Let $(g,\phi)$ be a solution of \eqref{eq.flow} satisfying \eqref{ass:energy-density}, i.e. with bounded energy density on $M \times [0,T)$. Then, we have
\begin{equation} \label{H2-est}
\sup_{t\in[0,T)}\int_M\Abs{\Deltao u}^2d\mu_{g_0(t)}<\infty
\end{equation}
and thus
\begin{equation} \label{H2-est2}
\sup_{t\in[0,T)}\Norm{u(t)}_{H^2(M,g_0(t))}<\infty.
\end{equation}
\end{prop}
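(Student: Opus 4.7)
The plan is to follow the spirit of Struwe's analysis \cite{Struwe} and derive a Gronwall-type differential inequality for $F(t):=\int_M (\Deltao u)^2\dmo$, from which uniform boundedness of $F$ on $[0,T)$ will follow. Since $\pt g_0$ is trace-free, the volume form $\dmo$ is independent of $t$, so $F'(t)=2\int_M \Deltao u\cdot\pt(\Deltao u)\dmo$ and $\pt(\Deltao u)=\Deltao(\pt u)+[\pt,\Deltao]u$. The commutator involves $\pt g_0$ and $\nao(\pt g_0)$ contracted with $\nao^2 u$ and $\nao u$ respectively; thanks to the uniform $C^k$-control on $\pt g_0$ from Corollary \ref{corB}, its contribution to $F'(t)$ is bounded by $C(F+1)$ via the Bochner inequality \eqref{eq.HessLap} and the Liouville bound from Corollary \ref{cor:H1-u}.

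For the main contribution, we substitute \eqref{eq:u} in the form $\pt u = e^{-2u}\Deltao u+f_1$ with $f_1:=\bar K(1-e^{-2u})+\alpha(e(\phi,g_0)e^{-2u}-\bar E(t))+\half\varrho-du(X)$. The leading term is $\int_M\Deltao u\cdot\Deltao(e^{-2u}\Deltao u)\dmo$; expanding this via the product rule and integrating by parts once more isolates the favourable sink $-\int_M e^{-2u}\abso{\nao\Deltao u}^2\dmo$ together with several nonlinear ``bad'' integrals of cubic/quartic expressions in $\Deltao u$ and $\nao u$ weighted by powers of $e^{-2u}$, all arising from derivatives falling on the factor $e^{-2u}$.

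The crucial step is to absorb these bad terms into the good sink modulo a function of $t$ lying in $L^1([0,T])$ and multiplied by $(F(t)+1)$. To achieve this, the plan is to combine: the weighted $W^{1,4}$ estimate of Corollary \ref{Cor:W14-u} (which gives $\int_M e^{2ku}\abso{\nao u}^4\dmo\leq f(t)(F+1)$ with $f\in L^1$); the Moser--Trudinger bound $\int_M e^{k\abs{u}}\dmo\leq C$ uniformly in $t$ from Corollary \ref{Cor:Moser-Trudinger}; the standard interpolation $\norm{v}_{L^4}^2\leq C\norm{v}_{L^2}\norm{v}_{H^1}$ applied to $v=e^{-u}\Deltao u$ and similar weighted quantities; and Young's inequality to absorb terms involving $\nao\Deltao u$ back into the sink. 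The remaining piece $\int_M \Deltao u\cdot\Deltao f_1\dmo$, rewritten by integration by parts as $-\int_M \nao\Deltao u\cdot\nao f_1\dmo$, is controlled via Lemma \ref{lemma:rho-X} for $\nao\varrho$ and $\nao X$, and via the energy decay identity \eqref{energy-decay} which supplies $\int_0^T\norm{\tau_g(\phi)}_{L^2(M,g)}^2\,dt<\infty$ and hence, through \eqref{Gausseq} and standard elliptic theory, controls $\nao e(\phi,g_0)$.

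Collecting everything yields $F'(t)\leq h(t)(F(t)+1)$ for some $h\in L^1([0,T])$, so Gronwall's lemma gives \eqref{H2-est}. Then \eqref{H2-est2} follows at once: \eqref{H2-est} together with \eqref{eq.HessLap} bounds $\int_M\abso{\nao^2 u}^2\dmo$, Corollary \ref{cor:H1-u} bounds $\int_M\abso{\nao u}^2\dmo$, and $\int_M u^2\dmo$ is controlled via \eqref{est:Moser-Trud} in Corollary \ref{Cor:Moser-Trudinger}. The hardest part will be the precise absorption of the worst nonlinear terms, in particular $\int_M e^{-2u}(\Deltao u)^3\dmo$ and $\int_M e^{-2u}(\Deltao u)^2\abso{\nao u}^2\dmo$, into the weighted good sink; this demands careful bookkeeping of the $e^{ku}$-weights and an iterated interplay between the Moser--Trudinger estimates and the $W^{1,4}$-type bound of Corollary \ref{Cor:W14-u}.
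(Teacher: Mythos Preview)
Your strategy differs from the paper's in one key structural choice. The paper, following Struwe, tests the evolution equation \eqref{eq:u} against $-e^{2u}\Deltao u_t$; this produces $\tfrac12\ddt\int_M(\Deltao u)^2\dmo$ together with the coercive term $\int_M e^{2u}\abso{du_t}^2\dmo$, and the source is then handled after one integration by parts as weighted gradient integrals $\int_M e^{2u}\abso{\nao f_1}^2\dmo$ (the terms $J_1,J_2,J_3$ in \eqref{eq.Js}). You instead differentiate $F(t)$ directly, substitute \eqref{eq:u}, and extract the alternative sink $\int_M e^{-2u}\abso{\nao\Deltao u}^2\dmo$; the two sinks correspond via $du_t=e^{-2u}\nao\Deltao u+\text{lower order}$. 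Your route is viable and leads to the same Gronwall inequality, but the weight bookkeeping is a little more delicate: to absorb the cubic and quartic terms you flag, one should work with $v:=e^{-u}\Deltao u$, apply \eqref{est:stand-interpol} to $v$ rather than to $\Deltao u$, and exploit that $\norm{v}_{L^2(M,g_0)}^2=\int_M e^{-2u}(\Deltao u)^2\dmo$ lies in $L^1([0,T])$ by \eqref{est:ut}. The paper's choice of test function has the modest advantage that the $e^{2u}$-weight in its sink pairs directly with the weighted source integrals after a single integration by parts, whereas in your scheme one must repeatedly shuttle between $e^{-2u}$- and $e^{2u}$-weighted quantities.

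There is one genuine oversight. You claim that the energy decay \eqref{energy-decay}, which gives $\int_0^T\norm{\tau_g(\phi)}_{L^2(M,g)}^2\,dt<\infty$, combined with ``\eqref{Gausseq} and standard elliptic theory'' controls $\nao e(\phi,g_0)$. But $\norm{\tau_g(\phi)}_{L^2(M,g)}^2=\int_M e^{-2u}\abs{\tau_{g_0}\phi}^2\dmo$, whereas bounding $\int_M e^{-2u}\abso{\nao e(\phi,g_0)}^2\dmo$ (which is exactly what arises in your $\int_M e^{2u}\abso{\nao f_1}^2$ term) reduces, after using the energy-density bound, to controlling the \emph{unweighted} $\int_M\abs{\tau_{g_0}\phi}^2\dmo$ in $L^1([0,T])$. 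The missing factor $e^{2u}$ cannot be absorbed since $u$ is not yet in $L^\infty$ at this stage. The paper closes this gap by a separate argument: testing \eqref{eq:phi} against $\phi_t e^{4u}$ yields $\int_0^T\!\int_M\abs{\tau_{g_0}\phi}^2\dmo\,dt<\infty$ (see \eqref{est:tau1}--\eqref{est:tau-for-later}), and you will need the same step. The appeal to \eqref{Gausseq} is a red herring, as the Gauss equation concerns the conformal factor, not the map.
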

\begin{proof}
As in \cite{Struwe}, we multiply the equation for the conformal factor, in our case 
\begin{equation}\label{testedevoleq}
u_t-e^{-2u}\Deltao u=\bar{K}(1-e^{-2u})+\alpha\cdot\big(e(\phi,g_0)e^{-2u}-\bar E(t)\big)+\frac12\varrho-du(X)
\end{equation}
with $-\Deltao u_t\cdot e^{2u}$ and integrate over $(M,g_0)$. Using in particular that $\pt g_0$ is trace-free and thus $\pt d\mu_{g_0}=0$, we first observe that the resulting integral on the left hand side is
\begin{equation}\label{4.17}
\begin{aligned} 
I_1(t)&:=\int_M\Abso{d u_t}^2e^{2u} \dmo+2\int_M u_t\Scal{du, d u_t}_{g_0} e^{2u} \dmo +\frac12\ddt\int_M\Abs{\Deltao u}^2\dmo\\
&\quad\,-\frac12 \peps \int_M\Abs{\Delta_{g_0(t+\eps)} u}^2 \dmo\big\vert_{\eps=0}\\
&\geq \frac34 \int_M \Abso{du_t}^2e^{2u}\dmo-4\int_M \Abso{du}^2\cdot \Abs{u_t}^2 e^{2u}\dmo+\frac12\ddt  \int_M\Abs{\Deltao u }^2\dmo  \\
&\quad\; -C\Norm{\pt g_0}_{C^1}\int_M \Big(\Abso{\nao du}^2+\Abso{d u}^2\Big)  \dmo.
\end{aligned}
\end{equation}

On the other hand, multiplying the right hand side of \eqref{testedevoleq} with $-\Deltao u_t\cdot e^{2u}$ and integrating over $(M,g_0)$, we obtain that
\begin{align*}
I_1(t)&=\int_M (-\Deltao u_t) \Big(\bar{K}(e^{2u}-1)+\alpha\big(e(\phi,g_0)-\bar{E}(t)e^{2u}\big)+\frac12 \varrho\, e^{2u} - du(X)e^{2u}\Big)\dmo\\
&\leq \frac14\int_M \Abso{d u_t}^2e^{2u} \dmo +C \int_M (\bar{K}^2+\alpha^2 \bar E(t)^2)\cdot \Abso{d(e^{2u})}^2 e^{-2u}\dmo\\
&\quad\;+C\alpha^2\int_M \Abso{d(e(\phi,g_0))}^2 e^{-2u}\dmo+C\int_M\Abso{d(\varrho e^{2u})}^2e^{-2u}\dmo\\
&\quad\;+C\int_M\Abso{d(du(X)e^{2u})}^2e^{-2u}\dmo.
\end{align*}
Using that $\alpha$, $\bar{K}$, $\bar{E}(t)$, $\norm{\pt g_0}_{C^1}$, and also $\int_M \abso{du}^2 \dmo$ are uniformly bounded and can thus be absorbed into the constants, we find, by plugging this back into \eqref{4.17},  
\begin{equation}\label{est:H2-1}
\begin{aligned}
I_2(t) &:= \int_M \Abso{du_t}^2e^{2u}\dmo+\ddt\int_M \Abs{\Deltao u }^2\dmo\\
&\leq 8\int_M \Abso{du}^2\cdot \Abs{u_t}^2 e^{2u}\dmo+C\int_M \Abso{\nao du}^2 \dmo\\
&\quad\;+C\int_M\big(\Abso{du}^4+1\big)\big(1+e^{2u}+e^{4u}+\Abs{X}^2e^{2u}\big)\dmo\\
&\quad\;+\int_M\big(\varrho^4 +\Abso{\nao X}^4\big) \dmo + C\cdot (J_1(t)+J_2(t)+J_3(t))
\end{aligned}
\end{equation} 
where
\begin{equation}\label{eq.Js}
\begin{aligned} 
J_1(t)&:=\int_M\Abso{d(e(\phi,g_0)}^2e^{-2u}\dmo\\
J_2(t)&:=\int_M\Abso{d\varrho}^2e^{2u}\dmo\\
J_3(t)&:=\int_M\Abso{\nao du}^2\Abs{X}^2 e^{2u}\dmo.
\end{aligned}
\end{equation}

Similar to \cite{Struwe}, the first term on the right hand side of \eqref{est:H2-1} can be treated using the interpolation inequality \eqref{est:stand-interpol}. More precisely, using Corollary \ref{cor:H1-u} and Corollary \ref{Cor:Moser-Trudinger} we obtain that 
\begin{align*} 
8\int_M \Abso{du}^2\cdot \Abs{u_t}^2 e^{2u}\dmo &\leq 8\Norm{(e^{u})_t}_{L^4(M,g_0)}^2 \Norm{du}_{L^4(M,g_0)}^2\\
&\leq C \Norm{(e^{u})_t}_{L^2(M,g_0)} \Norm{(e^{u})_t}_{H^1(M,g_0)}\Norm{du}_{L^2(M,g_0)} \Norm{du}_{H^1(M,g_0)}\\
&\leq \frac 1{16}\Norm{(e^{u})_t}_{H^1(M,g_0)}^2+ f(t)\cdot \Big(\int_M\Abso{\nao du}^2\dmo+1\Big)\\
&\leq \frac18\int_M\Abso{du_t}^2e^{2u}\dmo+\frac18 \int_M \Abso{du}^2\cdot \Abs{u_t}^2 e^{2u}\dmo\\
&\qquad\, +f(t)\cdot \Big(\int_M\Abs{\Deltao u}^2\dmo+1\Big),
\end{align*}
where here and in the following $f(t)$ denotes a generic function (allowed to change from line to line) that is integrable over $[0,T]$. In the particular case above we may actually choose $f(t)=C\int_M\abs{u_t}^2e^{2u}\dmo$, compare Corollary \ref{Cor:Moser-Trudinger}. Note also that we used \eqref{eq.HessLap} in the last step. Using absorption, the above becomes
\begin{equation}\label{est:H2-proof1}
8\int_M \Abso{du}^2\cdot \Abs{u_t}^2 e^{2u}\dmo \leq \frac14\int_M\Abso{du_t}^2e^{2u}\dmo +f(t)\cdot \Big(\int_M\Abs{\Deltao u}^2\dmo+1\Big).
\end{equation}

The second term on the right hand side of \eqref{est:H2-1} can be treated with \eqref{eq.HessLap}, while the next term can be bounded by
\begin{equation}
\big(\Norm{X}_{L^\infty}^2 + 1\big)\Big(C+\int_M (1+e^{4u})\Abs{du}^4 \dmo \Big)
\leq f(t)\cdot \Big(\int_M\Abs{\Deltao u}^2\dmo+1\Big),
\end{equation}
using Corollary \ref{Cor:W14-u}. Finally, by Lemma \ref{lemma:rho-X}, the fourth term, involving $L^4$-norms of $\varrho$ and $X$, is uniformly bounded and thus \eqref{est:H2-1} becomes
\begin{equation} \label{est:H2-2}
\begin{aligned}
\frac34 \int_M \Abso{du_t}^2e^{2u}\dmo+\ddt \int_M\Abs{\Deltao u }^2\dmo &\leq f(t)\cdot \Big(\int_M\Abs{\Deltao u}^2\dmo+1\Big)\\
&\quad\, + C\cdot \big(J_1(t)+J_2(t)+J_3(t)\big)
\end{aligned}
\end{equation}
for some $f\in L^1([0,T])$. In the following, we will estimate the terms $J_1$, $J_2$, $J_3$, given in \eqref{eq.Js}.\\

\textbf{Estimate for $J_1(t)$.} Using assumption \eqref{ass:energy-density} as well as 
\eqref{est:Moser-Trud} we may estimate
\begin{equation}\label{est:J1-1}
\begin{aligned}
J_1(t)&=\frac{1}{2}\int_M\Abso{d(\Abso{d \phi}^2)}^2 e^{-2u}\dmo=2 \int_M \Abso{\nao d\phi}^2\cdot\Abso{d \phi}^2e^{-2u}\dmo\\
&\leq C\int_M\Abso{\nao d\phi}^2\dmo\leq C\int_M\Abs{\Deltao \phi}^2\dmo+C\int_M\Abso{d\phi}^2 \dmo\\
&\leq C\int_M\Abs{\tau_{g_0} \phi}^2\dmo+C\int_M e(\phi,g_0)^2 \dmo +C \leq C\int_M\Abs{\tau_{g_0} \phi}^2\dmo+C.
\end{aligned}
\end{equation}
We stress that here the tension and the corresponding $L^2$ integral are to be computed on the constant curvature surface $(M,g_0)$ and not on $(M,g)$, so that the above term cannot be controlled by the evolution equation of the energy \eqref{energy-decay}. Instead, we estimate 
\begin{equation}\label{est:tau1}
\begin{aligned}
\int_M\Abs{\tau_{g_0} \phi}^2\dmo &= \int_M e^{4u} \Abs{\tau_{g} \phi}^2\dmo\\
&\leq \int_M\Abs{\phi_t}^2\cdot e^{4u}\dmo+C \int_M\Abs{X}^2 \Abso{d\phi}^2e^{4u}\dmo\\
&\leq \int_M \Abs{\phi_t}^2\cdot e^{4u}\dmo+C \int_M e(\phi,g) e^{6u}\dmo\\
&\leq \int_M \Abs{\phi_t}^2\cdot e^{4u}\dmo+C
\end{aligned}
\end{equation}
and bound the resulting weighted integral of $\phi_t$ by testing the evolution equation \eqref{eq:phi} with $\phi_te^{4u}$. This yields
\begin{align*}
\int_M e^{4u} \Abs{\phi_t}^2 \dmo &=\int_M e^{2u}\Deltao \phi \cdot \phi_t\, \dmo - \int_M d\phi(X)\cdot e^{4u} \phi_t\, \dmo\\
&= - \int_M e^{2u}\Scal{d\phi,d\phi_t}_{g_0} \dmo - 2\int_M e^{2u}\Scal{du,d\phi}_{g_0}\phi_t\, \dmo\\
&\quad\, - \int_M d\phi(X)\cdot e^{4u} \phi_t\, \dmo,
\end{align*}
which allows us to estimate
\begin{align*}
\int_M e^{4u} \Abs{\phi_t}^2 \dmo &\leq - \frac12 \ddt \int_M e^{2u}\Abso{d\phi}^2 \dmo + \int_M u_t \cdot \Abso{d\phi}^2 e^{2u}\,\dmo\\
&\quad\, +\Norm{\pt g_0}_{L^\infty(M,g_0)} \cdot\int_M \Abso{d\phi}^2 e^{2u}\,\dmo + \frac12 \int_M \Abs{\phi_t}^2e^{4u} \dmo\\
&\quad\, +C\int_M\Abso{d\phi}^2\Abso{du}^2 \dmo + C\Norm{X}_{L^\infty(M,g_0)}^2\cdot \int_M \Abso{d\phi}^2e^{4u} \dmo.
\end{align*}
Using that $\abso{d\phi}^2e^{-2u} \leq C$, as well as $\norm{\pt g_0}_{L^\infty(M,g_0)} \leq C$ and $\norm{X}_{L^\infty(M,g_0)}^2 \leq C$, we thus have
\begin{align*}
\int_M e^{4u} \Abs{\phi_t}^2 \dmo &\leq - \frac12 \ddt \int_M e^{2u}\Abso{d\phi}^2 \dmo + C\int_M u_t \cdot e^{4u}\,\dmo\\
&\quad\, + C\int_M e^{4u}\,\dmo + \frac12 \int_M \Abs{\phi_t}^2e^{4u} \dmo\\
&\quad\, +C\int_M e^{2u}\Abso{du}^2 \dmo + C \int_M e^{6u} \dmo.
\end{align*}
Hence
\begin{align*}
\int_M\Abs{\phi_t}^2e^{4u} \dmo+\ddt \int_M \Abso{d\phi}^2e^{2u}\dmo &\leq \int_M\Abs{u_t}^2 e^{2u}\dmo\\
&\quad\,+ C\int_M\big(\Abso{du}^2+1\big)\big(e^{2u}+e^{6u}\big)\dmo,
\end{align*}
where we remark that the right hand side is integrable over $[0,T]$ thanks to Corollary \ref{Cor:Moser-Trudinger}. Inserting this into \eqref{est:tau1}, we thus obtain
\begin{equation}\label{est:tau-for-later}
\TMint \Abs{\tau_{g_0} \phi}^2\dmo\leq C+\int_{M\times \{t=0\}}\Abso{d\phi}^2e^{2u}\dmo\leq C<\infty,
\end{equation}
which, once combined with \eqref{est:J1-1}, allows us to conclude that 
\begin{equation} \label{est:I}
J_1(t) \leq  C\cdot f_1(t)
\end{equation}
for a function $f_1\in L^1([0,T]).$\\

\textbf{Estimate for $J_2(t)$.} As $\varrho$ is a solution of \eqref{eq:rho}, we can estimate 
\begin{equation}\label{est:J2-1}
\begin{aligned}
\int_M\Abso{d\varrho}^2e^{2u}\dmo&\leq-\int_M \Deltao \varrho\cdot \varrho\, e^{2u}\dmo+C\int_M\Abso{d\varrho}\cdot \varrho\,\Abso{du}e^{2u}\dmo\\
&\leq \int_M \deltao\deltao\big(e^{-2u}\tf{T}(\phi,g)\big)\cdot \varrho\, e^{2u}\dmo + 2\bar{K}\int_M \varrho^2 e^{2u} \dmo \\
&\quad\, +\frac14\int_M\Abso{d\varrho}^2e^{2u}\dmo +\int_M\Abs{d u}^4 e^{4u} \dmo+\Norm{\varrho}_{L^4(M,g_0)}^4\\
&\leq \frac12\int_M\Abso{d\varrho}^2e^{2u}\dmo +C\int_M \Abso{\deltao(e^{-2u}\tf{T}(\phi,g))}^2e^{2u}\dmo\\ 
&\quad\, +f(t)\cdot \Big(\int_M\Abs{\Deltao u}^2\dmo+1\Big)
\end{aligned}
\end{equation}
where we applied Lemma \ref{lemma:rho-X} and Corollary \ref{Cor:W14-u} as well as $\bar{K}\leq 0$. The first term can be absorbed. To treat the second, we recall that $\tf{T}(\phi,g_0)=\tf{T}(\phi,g)$ and use \eqref{est:divT} to estimate
\begin{align*}
\int_M \Abso{\deltao(e^{-2u}\tf{T}(\phi,g))}^2e^{2u}\dmo &\leq \int_M \Abso{du}^2e^{2u}\dmo + C\int_M\Abs{\tau_{g}(\phi)}^2 e^{4u}\dmo\\
&\leq f(t) + C\int_M \Abs{\tau_{g_0}(\phi)}^2 \dmo,
\end{align*}
where $f\in L^1$, compare Corollary \ref{Cor:Moser-Trudinger}. Plugging this back into \eqref{est:J2-1} and using also \eqref{est:tau-for-later}, we conclude that
\begin{equation}  \label{est:II} 
J_2(t) \leq f_2(t)\cdot \Big(\int_M\abs{\Deltao u}^2\dmo+1\Big)
\end{equation}
for a function $f_2$ that is integrable in time.\\

\textbf{Estimate for $J_3(t)$.} Thanks to the bounds on $X$ from Lemma \ref{lemma:rho-X}, we have
\begin{equation*}
J_3(t)=\int_M\Abso{\nao du}^2\Abso{X}^2e^{2u}\dmo\leq C\int_M\Abso{\nao du}^2e^{2u}\dmo,
\end{equation*}
which in turn is bounded by
\begin{align*}
\int_M\Abso{\nao du}^2e^{2u}\dmo&\leq \int_M\Abs{\Deltao u}^2e^{2u}\dmo +C\int_M\Abso{du}^2e^{2u}\dmo\\
&\quad\, +C\int_M\Abso{du}^4e^{2u}\dmo + \frac12 \int_M\Abso{\nao du}^2e^{2u}\dmo.
\end{align*}
Thus, using absorption as well as the Corollaries \ref{Cor:Moser-Trudinger} and \ref{Cor:W14-u}, we deduce
\begin{equation}
J_3(t) \leq C\int_M\Abs{\Deltao u}^2e^{2u}\dmo+f(t)\cdot \Big(\int_M\Abs{\Deltao u}^2\dmo+1\Big).
\end{equation}
Observe that the first integral contains a different power of the conformal factor and hence needs to be further rewritten and estimated. Using the evolution equation \eqref{eq:u} once more, we get that for any $\eps>0$
\begin{equation}
\label{est:DeltauL2}
\begin{aligned} 
\int_M\Abs{\Deltao u}^2 e^{2u}\dmo&\leq \int_M\Deltao u\cdot u_t\,e^{4u}\dmo+R(t)\\
&\leq \eps\cdot \int_M\Abs{du_t}^2e^{2u}\dmo+C_\eps\int_M\Abso{du}^2e^{6u}\dmo\\
&+C\int_M \Abs{u_t}^2e^{2u}\dmo+ \int_M\Abso{du}^4e^{6u}\dmo+R(t).
\end{aligned}
\end{equation}
Here, $R(t)$ contains all ``lower order'' terms resulting from testing \eqref{eq:u} with $\Deltao u e^{4u}$. In particular, $R(t)$ can be estimated by 
\begin{equation}
\begin{aligned} 
R(t)&\leq \frac12\int_M\Abs{\Deltao u}^2 e^{2u}\dmo\\
&\quad\, +C\int_M e^{6u}\Big( (1-e^{-2u})^2+(e(\phi,g_0)e^{-2u}-\bar E)^2+\varrho^2+\Abso{du}^2\Abso{X}^2\Big)\dmo\\
&\leq \frac12 \int_M \Abs{\Deltao u}^2 e^{2u}\dmo +C\int_M e^{12u}\dmo +\Norm{\varrho}_{L^4}^4+C\int_M\Abso{du}^4\dmo +C,
\end{aligned}
\end{equation}
where we remark that the first term can be absorbed into the left hand side of \eqref{est:DeltauL2}.
In total, we thus obtain that for every $\eps>0$ there is a constant $C_\eps$ so that  
\begin{equation} \label{est:III}
J_3(t)\leq \eps\cdot \int_M\Abs{du_t}^2e^{2u}\dmo+ f(t)\cdot \Big(\int_M\Abs{\Deltao u}^2\dmo+C_\eps\Big).
\end{equation}

\textbf{Putting everything together.} Inserting \eqref{est:I}, \eqref{est:II} and \eqref{est:III} into \eqref{est:H2-2} and choosing $\eps>0$ sufficiently small we conclude that $t\mapsto \int_M\abs{\Deltao u }^2$ satisfies an estimate of the form
\begin{equation*}
\ddt \int_M\Abs{\Deltao u }^2\dmo\leq f(t)\cdot\Big(\int_M\Abs{\Deltao u}^2\dmo+1\Big), \quad f\in L^1([0,T])
\end{equation*}
and is thus uniformly bounded on $[0,T)$ e.g. by Gr\"{o}nwall's Lemma. Combined with the $H^1$-estimates we already proved, we thus have uniform bounds on the $H^2$-norm of $u$ as claimed, finishing the proof of Proposition \ref{prop:H2-u}. 
\end{proof}

\section{Higher regularity and curvature estimates}\label{sec.curv}

In this section, we finish the proof of Proposition \ref{prop:main-result}, using bootstrapping arguments as well as the following facts from the previous sections.\\

First, we know from Corollary \ref{corB} and Remark \ref{rem2CorB} that the metrics $g_0(t)$ and their $C^k$- and Sobolev-norms are uniformly equivalent for $t\in[0,T)$. We will thus compute all the norms in this section with respect to a \emph{fixed} metric $g_0(t_0)$, say for example $g_0(0)$. Moreover, these results also show that $t \mapsto g_0(t)$ is Lipschitz continuous with respect to any $C^k$-norm in space.\\

Second, we know from Lemma \ref{lemma:rho-X} that for every $1 \leq p<\infty$
\begin{equation}\label{5.rhoX}
\sup_{t\in[0,T)}\Norm{\varrho(t)}_{L^p(M)} + \Norm{X(t)}_{W^{1,p}(M)} < \infty.
\end{equation}
Third, by Proposition \ref{prop:H2-u},
\begin{equation}\label{5.H2u}
\sup_{t\in[0,T)}\Norm{u(t)}_{H^2(M)}<\infty,
\end{equation}
and finally, by Lemma \ref{lemma:inj-rad-g0}, we have
\begin{equation}\label{5.injg0}
\inf_{t\in [0,T)}\inj(M,g_0(t)) \geq \eps_0>0,
\end{equation}
Based on these facts, Proposition \ref{prop:main-result} will now be proved as follows. We first prove H\"{o}lder continuity of $u$ on $M\times[0,T]$. Then, we prove H\"{o}lder continuity for $\phi$, $d\phi$, $\varrho$, and $X$. In a last step, we then use this to obtain higher order ($C^{2,1;\beta}_{par}$) estimates for $u$ that will imply the curvature bounds claimed in Proposition \ref{prop:main-result}. Finally, we note that these curvature bounds together with \eqref{5.injg0} prove the injectivity radius bound claimed in Proposition \ref{prop:main-result}

\subsection{H\"{o}lder continuity}
Let $(g(t)=e^{2u(t)}g_0(t),\phi(t))_{t\in[0,T)}$,  $T<\infty$, be a solution of \eqref{eq.flow} for which \eqref{ass:energy-density} holds. By \eqref{5.H2u} and the Sobolev embedding theorem, for all $p\in[1,\infty)$ we have
\begin{equation}\label{5.Sobu}
\sup_{t\in[0,T)}\Norm{u(t)}_{L^\infty(M)} + \sup_{t\in[0,T)}\Norm{du(t)}_{L^p(M)} < \infty,
\end{equation}
where -- as explained above -- we compute the norms with respect to $g_0(0)$. Combined with Corollary \ref{corB} and Remark \ref{rem2CorB}, \eqref{5.Sobu} allows us to view $\pt - e^{-2u}\Deltao$ as a uniformly parabolic operator with uniformly bounded (even H\"{o}lder continuous) coefficients on the fixed Riemannian surface $(M,g_0(0))$. This allows us to apply standard parabolic theory.\\

In a first step, observe that
\begin{equation*}
\Abs{(\pt - e^{-2u}\Deltao)(u)} \leq C\big(1+\Abs{\varrho}+\Abs{X}+\Abs{du}\big),
\end{equation*}
which follows from \eqref{eq:u} and the fact that -- as a consequence of \eqref{ass:energy-density} and \eqref{5.Sobu} -- we also know that $\abs{d\phi}=\abs{d\phi}_{g_0(0)} \leq C\abs{d\phi}_{g_0(t)} = Ce^{u}\abs{d\phi}_g \leq C$ is uniformly bounded. Moreover, according to \eqref{5.Sobu} and Lemma \ref{lemma:rho-X}, the right hand side is in $L^p(M\times[0,T))$ for every $p\in[1,\infty)$. Thus for each such $p$ and every $\delta>0$, we obtain
\begin{equation*}
\nabla_{g_0(0)} du,\, \pt u \in L^p(M \times [\delta,T)),
\end{equation*}
and as $u$ is smooth on $[0,T)$ and the metrics $g_0(t)$ and their Sobolev norms are uniformly equivalent, in particular
\begin{equation}
\nabla_{g_0(0)}d u,\, \nabla_{g_0}d u,\, \pt u \in L^p(M \times [0,T)).
\end{equation}
Applying the Sobolev embedding theorem once more, we conclude that $u$ can be (continuously) extended to $M \times [0,T]$ with
\begin{equation}\label{5.Hoelderu}
u \in C^{0,\beta}(M \times [0,T]), \qquad \text{ for all } 0\leq \beta < 1,
\end{equation}
$C^{0,\beta}$ denoting the space of $\beta$-H\"{o}lder continuous functions with respect to the product distance $d = (d_{g_0(0)}^2 + d_{\mathbb{R}}^2)^{1/2}$.\\

Next, we note that by \eqref{eq:phi} and Lemma \ref{lemma:rho-X}
\begin{equation*}
\Abs{(\pt - e^{-2u}\Deltao)(\phi)} \leq C\big(e^{-2u}\Abs{d\phi}^2 + \Abs{X}\cdot\Abs{d\phi}\big) \leq C < \infty,
\end{equation*}
so similar to the above, we obtain
\begin{equation}\label{5.Lpphi}
\nabla_{g_0(0)} d\phi, \nabla_{g_0(t)} d\phi,\, \pt \phi \in L^p(M \times [0,T)), \qquad 1\leq p<\infty.
\end{equation}
Differentiating \eqref{eq:phi} in space then yields
\begin{align*}
\Abs{(\pt -e^{-2u}\Deltao)(\nabla_{g_0}\phi)} &\leq C\Abs{d\phi}\big(1+\Abs{d\phi}^2 + \Abs{\nabla_{g_0} d\phi} + \Abs{\nabla_{g_0} X}\big)\\
&\quad\, + C\Abs{X}\cdot\Abs{\nabla_{g_0} d\phi} + C\Abs{du}\big(\Abs{d\phi}^2 + \Abs{\nabla_{g_0} d\phi}\big)\\
&\leq C\big(1+\Abs{du}^2+\Abs{\nabla_{g_0} d\phi}^2 + \Abs{\nabla_{g_0}X}\big)
\end{align*}
with a right hand side that is again in $L^p(M\times[0,T))$ for every $p\in[1,\infty)$ due to \eqref{5.Sobu}, \eqref{5.Lpphi}, and \eqref{5.rhoX}. Thus, we also have
\begin{equation}\label{5.Lpdphi}
\nabla^3_{g_0} d\phi,\, \pt (\nabla_{g_0}\phi) \in L^p(M \times [0,T)), \qquad 1\leq p<\infty,
\end{equation}
and thus by the Sobolev embedding theorem, we can extend both $\phi$ and $d\phi$ to $M \times [0,T]$ with
\begin{equation}\label{5.Hoelderphi}
\phi,d\phi \in C^{0,\beta}(M \times [0,T]), \qquad \text{ for all } 0\leq \beta < 1.
\end{equation}
Based on the improved regularity of $u$ and $\phi$, we can now prove that also $\varrho$ and $X$, characterised by \eqref{eq:rho} and \eqref{eq:X}, respectively, have H\"{o}lder continuous extensions on $M \times [0,T]$. First, by \eqref{eq:rho}, \eqref{est:divT}, \eqref{5.Sobu}, and \eqref{5.Lpphi}, we obtain that
\begin{align*}
\Norm{-(\Deltao+2\bar{K})\varrho(t)}_{W^{-1,p}(M)} &\leq C\Norm{\deltao\big(e^{-2u}(d\phi \otimes d\phi - e(\phi,g_0)g_0)\big)}_{L^p(M)}\\
&\leq C \Norm{du(t)}_{L^p(M)} + \Norm{\nabla_{g_0} d\phi(t)}_{L^p(M)}\in L^p([0,T]).
\end{align*}
By standard elliptic theory, we thus have
\begin{equation}\label{5.Lprho1}
d\varrho \in L^p(M \times [0,T]), \qquad 1\leq p<\infty.
\end{equation}
Note that the corresponding estimate for $X$, namely
\begin{equation}\label{5.LpX1}
dX \in L^p(M \times [0,T]), \qquad 1\leq p<\infty.
\end{equation}
was already obtained in Lemma \ref{lemma:rho-X}. Differentiating \eqref{eq:rho} in time and using once again that $\pt d\mu_{g_0}=0$, we can characterise $\pt \varrho(t)$ as the unique solution of
\begin{equation*}
-(\Deltao+2\bar{K})(\pt \varrho(t)) = k(t) \quad \text{with} \quad \int_M (\pt\varrho) \dmo = 0,
\end{equation*}
where
\begin{equation*}
k(t) = \frac{d}{d\eps}\big\vert_{\eps=0}\,\Delta_{g_0(t+\eps)}\varrho(t) + \pt\big(\deltao\deltao(e^{-2u}\tf{T}(\phi,g_0))\big).
\end{equation*}
By elliptic regularity theory, we thus have that for any $p\in (1,\infty)$,
\begin{align*}
\Norm{\pt \varrho(t)}_{L^p(M)} &\leq C\Norm{k(t)}_{W^{-2,p}(M)}\\
&\leq C\Norm{\pt g_0}_{C^2(M)}\cdot \big(\Norm{\varrho(t)}_{L^p(M)} + 1\big)\\
&\quad\, + C\Norm{\pt u}_{L^p(M)} + C\Norm{\pt\nabla_{g_0}\phi}_{L^p(M)} +C,
\end{align*}
and thus that
\begin{equation}\label{5.Lprho2}
\pt\varrho \in L^p(M \times [0,T]), \qquad 1\leq p<\infty.
\end{equation}
A similar argument shows that also $X$ satisfies
\begin{equation}\label{5.LpX2}
\pt X \in L^p(M \times [0,T]), \qquad 1\leq p<\infty.
\end{equation}
From \eqref{5.Lprho1}--\eqref{5.LpX2}, we thus conclude that
\begin{equation}\label{5.HoelderrhoX}
\varrho, X \in C^{0,\beta}(M \times [0,T]), \qquad \text{ for all } 0\leq \beta < 1.
\end{equation}
We have thus proved the following result.
\begin{prop}
Let $(g(t)=e^{2u(t)}g_0(t),\phi(t))_{t\in[0,T)}$,  $T<\infty$, be a solution of \eqref{eq.flow} for which \eqref{ass:energy-density} holds and let $\varrho$ and $X$ be characterised by \eqref{eq:rho} and \eqref{eq:X}, respectively. Then $u$, $\phi$, $d\phi$, $\varrho$, and $X$ are all $\beta$-H\"{o}lder continuous with respect to the (standard) product metric $d = (d_{g_0(0)}^2 + d_{\mathbb{R}}^2)^{1/2}$, and for all $0\leq \beta <1$.
\end{prop}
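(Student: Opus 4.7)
The plan is to run a parabolic--elliptic bootstrap that starts from the estimates collected in the previous sections: the uniform equivalence (in $t$) of the $C^k$ and Sobolev norms of $g_0(t)$ from Corollary \ref{corB}, the uniform $W^{1,p}$ control on $\varrho, X$ from Lemma \ref{lemma:rho-X}, the uniform $H^2$ bound on $u$ from Proposition \ref{prop:H2-u}, and the energy density bound \eqref{ass:energy-density}. I will fix once and for all the reference metric $g_0(0)$ and measure every norm with respect to it. With this setup, $H^2 \hookrightarrow L^\infty \cap W^{1,p}$ gives uniform $L^\infty$ control on $u$ and uniform $L^p$ control on $du$, so $\pt - e^{-2u}\Deltao$ acts as a uniformly parabolic operator with bounded coefficients on the fixed Riemannian surface $(M,g_0(0))$, and standard parabolic $L^p$ theory becomes available.

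First I would upgrade $u$ itself. Because $\abs{d\phi}_{g_0(0)} \leq C e^{u} \abs{d\phi}_g$ is bounded by \eqref{ass:energy-density}, the right-hand side of \eqref{eq:u} is pointwise dominated by $1 + \abs{\varrho} + \abs{X} + \abs{du}$, which lies in $L^p(M \times [0,T))$ for every $p < \infty$. Parabolic $L^p$ theory gives $\nabla_{g_0}^2 u, \pt u \in L^p(M \times [0,T))$, and parabolic Morrey embedding then upgrades $u$ to $C^{0,\beta}(M \times [0,T])$ for every $\beta < 1$.

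Second I would bootstrap the map component in two rounds. Round one: the right-hand side of \eqref{eq:phi} is already in $L^\infty$ thanks to the energy density and the $L^\infty$ bounds on $u$ and $X$, so parabolic $L^p$ theory yields $\nabla_{g_0}^2 \phi, \pt \phi \in L^p$ and hence $\phi \in C^{0,\beta}$. Round two: differentiating \eqref{eq:phi} in space produces an equation for $\nabla_{g_0} \phi$ whose right-hand side is dominated by combinations of $\abs{du}$, $\abs{\nabla_{g_0} d\phi}$, $\abs{X}$ and $\abs{\nabla_{g_0} X}$, all now in $L^p$ in spacetime; applying parabolic $L^p$ theory once more gives one further spatial derivative and a time derivative of $d\phi$ in $L^p$, hence $d\phi \in C^{0,\beta}$.

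Finally, for $\varrho$ and $X$ I would use the defining elliptic equations \eqref{eq:rho} and \eqref{eq:X}. Estimate \eqref{est:divT} bounds their right-hand sides in $W^{-1,p}$ by $\norm{du}_{L^p} + \norm{\tau_g(\phi)}_{L^p}$, which is under control by the previous steps, so elliptic $L^p$ theory yields spatial $W^{1,p}$ bounds uniform in $t$. To obtain time regularity, I would differentiate \eqref{eq:rho} and \eqref{eq:X} in $t$; the resulting elliptic equations have right-hand sides involving $\pt u$, $\pt \nabla_{g_0}\phi$ (controlled by the previous rounds) and the smooth tensor $\pt g_0$ (controlled by Corollary \ref{corB}), so elliptic $L^p$ theory gives $\pt \varrho, \pt X \in L^p$. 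Combining spatial and temporal $L^p$ bounds, Morrey embedding furnishes $\varrho, X \in C^{0,\beta}$. The main subtlety is the implicit, non-local definition of $\varrho$ and $X$: their time regularity is obtained by differentiating the defining elliptic PDEs, so the ordering $u \to (\phi,d\phi) \to (\varrho,X)$ must be respected strictly and one must invoke Corollary \ref{corB} at each step to ensure that the time derivative of the elliptic operator itself contributes only a controlled perturbation.
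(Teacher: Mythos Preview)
Your proposal is correct and follows essentially the same approach as the paper's own proof: fix the reference metric $g_0(0)$, use the $H^2$ bound on $u$ together with Sobolev embedding to view $\pt-e^{-2u}\Deltao$ as a uniformly parabolic operator, apply parabolic $L^p$ theory successively to $u$, $\phi$, and $\nabla_{g_0}\phi$, and finally obtain the H\"older regularity of $\varrho$ and $X$ from their defining elliptic equations (spatial regularity) and from differentiating these equations in time (temporal regularity). The ordering $u\to(\phi,d\phi)\to(\varrho,X)$ and the use of Corollary~\ref{corB} to control the time-dependence of the elliptic operators are exactly as in the paper.
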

\begin{remark}\label{5.rem}
This proposition implies in particular that $u$, $\phi$, $d\phi$, $\varrho$, and $X$ are also elements of the space $C^{0,\beta}_{par}(M\times [0,T])$ of functions (or vector fields or 1-forms, respectively) which are $\beta$-H\"{o}lder continuous with respect to the \emph{parabolic} distance $d_{par} = d_{g_0(0)} + d_{\mathbb{R}}^{1/2}$. Using parabolic H\"{o}lder spaces will simplify the argument in the next subsection. 
\end{remark}

\subsection{Curvature bounds and proofs of the main results}
By Remark \ref{5.rem} above and in view of the fact that $t \mapsto g_0(t)$ is Lipschitz continuous with respect to any $C^k$-norm in space, we can thus view \eqref{eq:u} as a linear parabolic equation
\begin{equation*}
\pt u - e^{-2u}\Deltao u - du(X) \in C^{0,\beta}_{par}(M \times [0,T]), \qquad \forall\, 0 \leq \beta < 1,
\end{equation*}
with H\"{o}lder continuous coefficients and right hand side. Thus, by standard parabolic theory, we can deduce that $u \in C^{2,1;\beta}_{par}(M \times [0,T])$ and in particular
\begin{equation}\label{5.C2est}
\sup_{M \times [0,T]} \Abso{\nabla_{g_0} du} < \infty.
\end{equation}
In view of the Gauss equation \eqref{Gausseq} and of \eqref{5.Sobu}, this yields
\begin{equation}\label{5.curvbound}
\sup_{M \times [0,T]} \Abs{K_g} < \infty.
\end{equation}
proving the curvature bounds claimed in Proposition \ref{prop:main-result}. From this, it is now easy to deduce the main results stated in  the introduction.

\begin{proof}[Proof of Proposition \ref{prop:main-result}]
Having already deduced the necessary curvature bounds, it remains to prove the injectivity radius bound claimed in Proposition \ref{prop:main-result}. It is well known that under a curvature bound, a lower bound on the injectivity radius is equivalent to a lower bound on the volume of (small) balls, see e.g. Proposition 1.35 and Theorem 1.36 of \cite{MT07}. In view of this fact, we obtain the desired injectivity radius bounds for the metrics $g(t)$ from the corresponding bounds for the metrics $g_0(t)$ in \eqref{5.injg0}, together with the uniform curvature bounds \eqref{5.curvbound} and the uniform bound \eqref{5.Sobu} on the conformal factor $u(t)$.\\

By standard arguments, a solution $(g,\phi)$ of \eqref{eq.flow} can always be smoothly extended in the presence of uniform bounds on the curvature, the injectivity radius and the energy density, compare with Section 6 of \cite{M12} where the corresponding result was proven in detail for the non-renormalised Harmonic Ricci Flow (in arbitrary dimension). 
\end{proof}

\begin{proof}[Proof of Theorem \ref{thm:large-alpha} and \ref{thm:blow-up}]
Recall that a solution $(\tilde{g},\tilde{\phi})$ of the (volume-preserving) Harmonic Ricci Flow \eqref{eq.RH} and its corresponding solution $(g,\phi)$ of \eqref{eq.flow} differ only by the pull-back with diffeomorphisms. Hence, Proposition \ref{prop:main-result} shows that if a solution of \eqref{eq.RH} cannot be continued smoothly past some time $T<\infty$ then the energy density cannot be bounded uniformly on $M\times [0,T)$.\\

In order to obtain Theorem \ref{thm:blow-up}, we note that a blow-up of the energy density always causes a blow-up of the curvature to happen at the same time; this result was proven by the first author in \cite[Corollary 5.3]{M12} for the non-renormalised Harmonic Ricci Flow in arbitrary dimensions and that proof applies with only minor modifications also for the renormalised flow: Indeed, a short calculation shows that 
\begin{equation*}
(\pt-\Delta_g)\big(\alpha \Abs{d\phi}_g ^2-2K_g\big)\leq C \big(\Abs{d\phi}_g ^2+1\big)\big(\Abs{K_g}+1\big)
\end{equation*}
so that a uniform bound on the curvature on $M\times [0,T)$ would imply that the energy density can grow at most exponentially in time and thus cannot blow up at $T<\infty$. This establishes Theorem \ref{thm:blow-up}.\\

In the setting of Theorem \ref{thm:large-alpha}, i.e. for $\alpha$ satisfying \eqref{eq.alphabound}, 
the Bochner-formula prevents $\abs{d\phi(x,t)}_{g(t)}^2$ from blowing up, as follows. Denoting
\begin{equation}
C_K = 2\max \big\{ K(\tau)\mid \tau\subset T_pN \textrm{ two plane, } p\in N \big\},
\end{equation}
a short computation similar to \cite[Proposition 4.4]{M12} yields
\begin{align*}
\pt \Abs{d\phi}_g^2 &= \Delta_g\Abs{d\phi}_g^2 - 2\alpha\Abs{d\phi \otimes d\phi}_g^2 - 2\Abs{\nabla_g d\phi}_g^2 + 2\Scal{\!\phantom{.}^N\Rm(\nabla_i\phi, \nabla_j\phi)\nabla_j\phi,\nabla_i\phi}\\
&\quad\, + 2(\alpha\bar{E}(t) - \bar{K})\Abs{d\phi}_g^2\\
&\leq \Delta_g\Abs{d\phi}_g^2 - (\ubar{\alpha}-C_K)\Abs{d\phi}_g^4 + 2(\bar{\alpha}\bar{E}(t)+1)\Abs{d\phi}_g^2,
\end{align*}
where we used that $\alpha(t)\in[\ubar{\alpha},\bar{\alpha}]$. We point out that the second term on the right hand side is negative, as by assumption \eqref{eq.alphabound} $C_K < \ubar{\alpha}$. Thus, using in particular also that $\bar{E}(t)$ is non-increasing, we find, for as long as the flow exists,
\begin{equation}
\Abs{d\phi}_g ^2 \leq \max\Big\{ \max_{y\in M} \Abs{d\phi(y,0)}_{g(0)}^2, \, 2\frac{\bar{\alpha}\bar{E}(0)+1}{\ubar{\alpha}-C_K} \Big\}.
\end{equation}
This finishes the proof of Theorem \ref{thm:large-alpha}.
\end{proof}

\makeatletter
\def\@listi{%
  \itemsep=0pt
  \parsep=1pt
  \topsep=1pt}
\makeatother
{\fontsize{10}{11}\selectfont

\vspace{10mm}

Reto Buzano (M\"uller)\\
{\sc School of Mathematical Sciences, Queen Mary University of London, London E1 4NS, United Kingdom}\\

Melanie Rupflin\\
{\sc Mathematical Institute, University of Oxford, Oxford OX2 6GG, United Kingdom}\\

\end{document}